\newtheorem{theorem}{Theorem}[section]
\newtheorem{lemma}[theorem]{Lemma}
\newtheorem{proposition}[theorem]{Proposition}
\newtheorem{definition}[theorem]{Definition}
\newtheorem{conjecture}[theorem]{Conjecture}
\def\text{\mbox}
\def\tilde{\widetilde}
\def\hat{\widehat}
  \renewcommand{\hat}{\widehat}
  \renewcommand{\tilde}{\widetilde}
    \DeclareMathOperator{\Gr}{Gr}
     \DeclareMathOperator{\GL}{GL}    
     \DeclareMathOperator{\SL}{SL}   
      \DeclareMathOperator{\N}{N}
\newcommand{\beqn}{\begin{equation}}
\newcommand{\eeqn}{\end{equation}}
 \newcommand{\I}{\mathcal{I}}
       \title[Counter Example to a Strong Matroid Minor Conjecture]       
       {Counter Example to a Strong Matroid Minor Conjecture}
\date{}
\author{Shrawan Kumar}
\address{S. Kumar: Department of Mathematics, University of North Carolina, Chapel Hill, NC 27599-3250, USA}
\email{shrawan@email.unc.edu} 
\begin{document}

\maketitle{}

\noindent
{\bf Abstract:} The main result of this note asserts that a strong form of the Matroid Minor Conjecture due to J. Draisma is not true, i.e., there exist properly ascending chains of $S_\infty$-stable ideals in the affine coordinate ring of the {\it affine infinite Grassmannian}, where $S_\infty$ is the infinite symmetric group. In fact, we explicitly construct such an ascending chain. His conjectures on topological noetherian property for the affine infinite Grassmannian remain open though. 

\section{Introduction} Let $k$ be any (including finite)  field. For positive integers $n, p$, let $V_{n,p}$ be the vector space over  $k$ with basis 
$$\{x_{-n}, x_{-n+1}, \dots, x_{-1}, x_1, x_2, \dots, x_p\}.$$
Let $\{x^*_{-n}, x^*_{-n+1}, \dots, x^*_{-1}, x^*_1, x^*_2, \dots, x^*_p\}$ be the dual basis of the dual vector space $V_{n,p}^*$.

Let $\Gr(p, V^*_{n,p})$ be the Grassmannian of $p$-planes in $V^*_{n,p}$. Then, we have the Pl\"ucker embedding: 
\[\iota: \Gr(p, V^*_{n,p})\hookrightarrow \mathbb{P}\left(\wedge^p(V^*_{n,p})\right), \,\, A \mapsto \wedge^p(A), \,\,\text{for $A \in  \Gr(p, V^*_{n,p})$}.\]
Let $\tilde{\Gr}(p, V^*_{n,p})$ be the corresponding affine cone (under the above embedding). Thus, we get an embedding
\[\tilde{\Gr}(p, V^*_{n,p}) \hookrightarrow \wedge^p(V^*_{n,p}).\]
 This makes $\tilde{\Gr}(p, V^*_{n,p})$ a closed irreducible (affine) subvariety of $ \wedge^p(V^*_{n,p})$. 
 
 Define the surjective maps 
\[\xi= \xi^{n+1, p}: \wedge^p(V^*_{n+1,p}) \twoheadrightarrow  \wedge^p(V^*_{n,p})\]
induced from the standard inclusion of the bases and 
\[\theta=\theta^{n, p+1}:  \wedge^{p+1}(V^*_{n,p+1}) \twoheadrightarrow  \wedge^{p}(V^*_{n,p}),\,\,\, \omega \mapsto  i_{x_{p+1}}\omega,\]
where $i$ is the interior multiplication.  
Now, the maps
$\theta^{n+1, p+1}$ and $\xi^{n+1, p+1}$
 induce the restriction maps which are surjective:
\begin{equation*} \label{eqnn2}
\tilde{\Gr}(p, V^*_{n+1,p})\xleftarrow{\tilde{\theta}}  \tilde{\Gr}(p+1, V^*_{n+1,p+1})     \xrightarrow{\tilde{\xi}} \tilde{\Gr}(p+1, V^*_{n,p+1}).
\end{equation*}
Define a partial order $\leq$ on $\mathbb{Z}_{\geq 1}\times \mathbb{Z}_{\geq 1}$ by
$(n,p) \leq (m, q)$ if $n\leq m$ and $p\leq q$. 
The above maps  $\tilde{\theta}$ and $\tilde{\xi}$ give rise to a surjective map between the affine varieties (see $\S$2 for more details):
\begin{equation*} \label{eqnn4}
\tilde{\Gr}(q, V^*_{m, q})\to  \tilde{\Gr}(p, V^*_{n,p})\,\,\,\text{for  $(n,p) \leq (m, q)$}.
\end{equation*}
Define the affine schemes:
\begin{equation*}
\tilde{\Gr}(\infty/2, V^*_\infty):= \varprojlim_{(n, p)}\,  \tilde{\Gr}(p, V^*_{n,p}),\,\,\text{and}\,\, 
 \wedge^{\infty/2} (V^*_\infty):= \varprojlim_{(n, p)}\,  \wedge^p(V^*_{n,p}).
\end{equation*}
 Define the group 
$$\GL(\infty):= \varinjlim_{(n, p)}\, \GL(V_{n, p}),\,\,\,\text{
and its subgroup}\,\,
\N(\infty):= \varinjlim_{(n, p)}\, \N(V_{n, p}),$$
where  $\N(V_{n, p})$ denotes the normalizer of the standard maximal torus in $\SL(V_{n, p})$. 
 The standard action of $\GL(V_{n, p})$ on  $\wedge^p(V^*_{n,p})$ gives rise to  an action of $\GL(\infty)$ on $\tilde{\Gr}(\infty/2, V^*_\infty)$
 (cf. $\S$2 for more details).  
Define the infinite symmetric group  
\begin{equation*}  S_\infty :=  \varinjlim_{n}\, S_n, \,\,\,\text{where $S_n$ is the symmetric group on the symbols 
 $\{-n, -(n-1), \dots, -1, 1, 2, \dots, n\}$}. 
\end{equation*}
Then, $S_\infty$ canonically embeds in $\GL(\infty)$ via the permutation matrices.
\vskip1ex

Jan Draisma kindly told us his following conjecture.
\begin{conjecture} Let $k$ be any field.  
The affine infinite Grassmannian  $\tilde{\Gr}(\infty/2, V^*_\infty)$  is topologically $S_\infty$-noetherian, i.e., every descending chain of  
$S_\infty$-stable Zariski-closed subsets of  $\tilde{\Gr}(\infty/2, V^*_\infty)$
stabilizes. 
\vskip1ex

A slightly weaker form of the conjecture states that  $\tilde{\Gr}(\infty/2, V^*_\infty)$ is  topologically $\N(\infty)
$-noetherian.
\vskip1ex
A stronger form of the conjecture states that any ascending chain of $S_\infty$-stable ideals in the affine coordinate ring of \,
 $\tilde{\Gr}(\infty/2, V^*_\infty)$ stabilizes. 
\end{conjecture}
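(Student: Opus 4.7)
The Conjecture as stated comprises three statements of increasing strength: topological $S_\infty$-noetherianity of $\tilde{\Gr}(\infty/2, V^*_\infty)$, the slightly weaker topological $\N(\infty)$-noetherianity, and at the top the algebraic statement that every ascending chain of $S_\infty$-stable ideals in the coordinate ring stabilizes. Since the abstract announces a disproof of only this strongest form while leaving the two topological versions open, the plan is to construct an explicit ascending chain of $S_\infty$-stable ideals $I_1 \subsetneq I_2 \subsetneq \cdots$ in $\co(\tilde{\Gr}(\infty/2, V^*_\infty))$ whose radicals nevertheless stabilize, so that the associated chain of closed subvarieties $V(I_k)$ does not properly descend and no collateral counterexample to topological noetherianity is produced.

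To set up the construction, I would first make the coordinate ring
\[
R_\infty := \co\bigl(\tilde{\Gr}(\infty/2, V^*_\infty)\bigr) = \varinjlim_{(n,p)} \co\bigl(\tilde{\Gr}(p, V^*_{n,p})\bigr)
\]
concrete. Each finite factor is generated by Pl\"ucker coordinates $p_S$, one for each $p$-subset $S$ of the indexing alphabet, subject to the usual quadratic Pl\"ucker relations. A direct computation from the definitions of $\xi$ and $\theta$ in the excerpt shows that the transition maps on coordinate rings send $p_S$ either to itself (under $\xi^*$) or to $\pm p_{S\cup\{p+1\}}$ (under $\theta^*$). Thus $R_\infty$ admits a presentation by Pl\"ucker symbols indexed by finite subsets of $\bz\setminus\{0\}$, modulo Pl\"ucker relations, with $S_\infty$ acting by permuting the indices.

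With this presentation available, I would then hunt for test polynomials $f_1, f_2, \ldots \in R_\infty$ whose $S_\infty$-orbits become combinatorially more complex as $k$ grows, a natural candidate being a sequence of \emph{pattern products} --- products of Pl\"ucker coordinates supported on pairwise disjoint, or otherwise structurally incompatible, index sets --- of unbounded length, chosen so that the Pl\"ucker relations cannot compress a longer pattern into a combination of shorter ones. Setting $I_k := \bigl\langle\, S_\infty \cdot \{f_1,\dots,f_k\}\,\bigr\rangle$, the $S_\infty$-stability and ascending property are automatic; the main obstacle, and the heart of the proof, is to prove the strict containment $f_{k+1} \notin I_k$. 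This will require a combinatorial invariant on a monomial basis of $R_\infty$, respected by the $S_\infty$-action and by multiplication, that discriminates patterns of different ``width.'' To simultaneously keep the topological conjectures open one must also verify $f_{k+1} \in \sqrt{I_k}$, equivalently that $f_{k+1}$ vanishes on $V(I_k)$, so that the obstruction lives purely in the nilpotent part of $R_\infty / I_k$ and does not descend to a properly decreasing chain of $S_\infty$-stable closed subsets.
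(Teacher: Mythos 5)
Your high-level plan --- exhibit an explicit ascending chain $I_3 \subsetneq I_4 \subsetneq \cdots$ of $S_\infty$-stable ideals generated by products of Pl\"ucker coordinates --- is the right shape, and the paper does exactly this, with $f_n = a_n b_n$ for $a_n = x_{-2}\wedge x_2\wedge\cdots\wedge x_n$ and $b_n = x_{-(n-1)}\wedge\cdots\wedge x_{-1}\wedge x_1\wedge x_{n+1}$. But two points in your proposal diverge from what actually gets done. First, you build in a requirement that $f_{k+1}\in\sqrt{I_k}$, so the closed sets $V(I_k)$ stabilize and the topological conjectures are not accidentally refuted. The paper imposes no such constraint and never checks it; it simply proves $a_\ell b_\ell\notin\I_n$ for $n<\ell$ and remarks that the topological conjectures ``remain open.'' Whether the radicals of the $\I_n$ stabilize is left unaddressed, so your extra condition is both unnecessary for the counterexample and an additional burden you would have to verify.

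Second, and more substantively, the key step of proving strict containment is not carried out by a ``combinatorial invariant on a monomial basis of $R_\infty$.'' That would be delicate: the coordinate ring of the Grassmannian has Pl\"ucker relations, standard monomial bases are not $S_\infty$-stable in any convenient way, and one cannot cleanly read off membership in a non-monomial ideal from a monomial order on the quotient. The paper instead \emph{evaluates}: to show a combination $a_\ell b_\ell - \sum_{i<\ell}\sigma_i(a_ib_i)$ is nonzero in $k[\tilde{\Gr}(\infty/2,V^*_\infty)]$, it restricts to a large finite Grassmannian $\tilde{\Gr}(p, V^*_{m-1,p})$, pulls back to the big open cell $U^-_m$ of that Grassmannian, computes the functions explicitly as minors $\det(\alpha^{n_i}_{m_j})$ of the cell coordinates (Lemma 3.3), and then isolates the $a_\ell b_\ell$ term via a torus-weight (``$k^*$-grading'') argument on the $\alpha^i_j$, finishing with a cardinality comparison on index sets. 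This open-cell evaluation plus weight/counting argument is the essential idea your proposal is missing; the purely combinatorial antichain argument (which your proposal resembles) appears in the paper only for the ambient space $\wedge^{\infty/2}(V^*_\infty)$ in Proposition 3.1, where there are no relations, not for the Grassmannian itself.
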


The main result of this note is the following (cf. Theorem \ref{thm6}).
\begin{theorem} The strong form of the above conjecture is false, i.e., there exists an ascending chain of $S_\infty$-stable ideals in the affine coordinate ring of 
 $\tilde{\Gr}(\infty/2, V^*_\infty)$ which does not  stabilize. In fact, we give such an example explicitly. 
 \end{theorem}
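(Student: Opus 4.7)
The plan is to construct an explicit, strictly ascending chain $I_1 \subsetneq I_2 \subsetneq \cdots$ of $S_\infty$-stable ideals in the coordinate ring $R := \mathcal{O}(\tilde{\Gr}(\infty/2, V^*_\infty))$. First I would make $R$ concrete: by duality between inverse limits of affine schemes and direct limits of their coordinate rings, $R = \varinjlim_{(n,p)} \mathcal{O}(\tilde{\Gr}(p, V^*_{n,p}))$, where each finite-level ring is presented by Plücker coordinates $[S]$ indexed by $p$-subsets $S \subset \{-n,\ldots,-1,1,\ldots,p\}$ modulo the Plücker quadrics. Unwinding the transition maps dual to $\tilde\theta$ and $\tilde\xi$, one finds that $R$ is generated by ``admissible'' Plücker coordinates $[S]$, where $S \subset \bz\setminus\{0\}$ agrees with $\bz_{>0}$ outside a finite set---with equal numbers of ``extra'' negatives and ``missing'' positives---modulo the limiting Plücker relations; $S_\infty$ acts by permuting these indices via its action on $\bz\setminus\{0\}$.

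Next, I would exhibit an explicit sequence $f_1, f_2, \ldots \in R$ and set $I_k := (S_\infty \cdot f_1, \ldots, S_\infty \cdot f_k)$. A natural candidate is to take each $f_k$ to be a polynomial in Plücker coordinates---likely a product of two or a few of them---whose combinatorial data lies in a new $S_\infty$-orbit unreachable from $f_1, \ldots, f_k$ by any $R$-linear combination of $S_\infty$-translates. The choice should carry a combinatorial invariant that is $S_\infty$-stable, controlled under multiplication by elements of $R$, compatible with the Plücker relations, and strictly increasing along the sequence. Since Draisma's topological conjecture is not being contradicted, the construction probably relies in an essential way on non-radical phenomena, so $f_k$ is likely a monomial (or polynomial) expression whose vanishing locus coincides with that of earlier generators but whose ideal-theoretic content is strictly larger.

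The main obstacle is precisely the verification that $f_{k+1} \notin I_k$. Since $S_\infty$-orbits are enormous and the Plücker relations can tangle products of Plücker coordinates in non-obvious ways, this cannot be reduced to a simple monomial-support comparison. The way to attack it is to produce, for each $k$, an $S_\infty$-equivariant quotient $R \twoheadrightarrow Q_k$ (or an $S_\infty$-equivariant $R$-module $M_k$) in which every element of every $S_\infty$-orbit of $f_1, \ldots, f_k$ vanishes, while $f_{k+1}$ does not. Building this family of separating quotients so as to respect simultaneously the Plücker relations, the $S_\infty$-action, and the invariant tracking $k$ is the technical heart of the proof. Once the separating family is in hand, the strict inclusions $I_k \subsetneq I_{k+1}$ follow, witnessing the failure of the strong $S_\infty$-noetherian property and producing the desired counterexample to Draisma's strong conjecture, while leaving the weaker topological forms of the conjecture untouched.
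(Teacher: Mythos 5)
Your outline correctly identifies the overall architecture — construct an explicit ascending chain of ideals $I_k$ generated by $S_\infty$-orbits of products of Plücker coordinates, then verify the strict inclusions $f_{k+1}\notin I_k$ — and you rightly flag that the verification modulo the Plücker relations is where all the work lies. But the proposal stops exactly there: you name the need for an ``$S_\infty$-equivariant separating quotient'' that kills $I_k$ but not $f_{k+1}$, and then declare this ``the technical heart of the proof'' without constructing it. As written, this is a plan, not a proof; the central step is acknowledged but left entirely unfilled, and nothing in the proposal constrains what the generators $f_k$ actually are or why any candidate quotient would work.

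For comparison, the paper takes precisely the shape you describe but supplies the missing heart concretely. The generators are the degree-two monomials $a_n\cdot b_n$ of equation \eqref{eqn12} (products of two Plücker coordinates); Proposition \ref{thm 4} first shows $\{a_nb_n\}$ forms an antichain under $|_{S_\infty}$ by a simple cardinality count, handling the case of $\wedge^{\infty/2}(V^*_\infty)$ without the Plücker ideal. To handle the Plücker ideal $\I$, the ``separating quotient'' the paper uses is restriction to the big open cell $U^-_m$ of a suitable finite Grassmannian $\tilde\Gr(p,V^*_{m-1,p})$: Lemma \ref{lem5} computes each Plücker coordinate on $U^-_m$ as a signed minor of the unipotent parameters $\alpha^i_j$, turning ideal-membership in the coordinate ring into a question about explicit polynomials in the $\alpha^i_j$. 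The nonvanishing of $a_\ell b_\ell - \sum_i \sigma_i(a_ib_i)$ on $U^-_m$ is then established by a two-sided $k^*$-weight argument (rescaling $\alpha^i_j$ in $i$ and in $j$ separately) that isolates which $S_\infty$-translates could possibly contribute a matching term, followed by the same cardinality comparison as in Proposition \ref{thm 4}. Your instinct about ``non-radical phenomena'' is also in the right direction — the paper's ideals grow while the topological conjecture remains open — but to turn your proposal into a proof you would need to (i) pin down the $f_k$ explicitly, (ii) identify the big-cell restriction (or an equivalent device) as the separating quotient, and (iii) carry out the weight-and-counting argument that makes the separation actually go through.
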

 
Before we can explain the significance of Draisma's Conjecture \ref{conj2.8} to some  important results in Graph Theory, we need to 
 briefly explain some of the very significant results in  {\it Matroid Minors Theory}.

We begin by recalling the following conjecture due to Rota \cite{Ro}.
\begin{conjecture}\label{conj2.1}
 For each finite field $k$, there
are, up to isomorphism, only finitely many excluded
minors for the class of $F$-representable matroids.
\end{conjecture}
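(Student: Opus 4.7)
The plan is to produce, for each integer $n \geq 1$, an explicit element $F_n$ of the coordinate ring $R := k[\tilde{\Gr}(\infty/2, V^*_\infty)]$ such that the chain of $S_\infty$-stable ideals
\[
J_n := \bigl( S_\infty \cdot F_1, \, S_\infty \cdot F_2, \, \ldots, \, S_\infty \cdot F_n \bigr) \subset R
\]
is strictly ascending.

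First, I would make the ring $R$ concrete as the direct limit $\varinjlim R_{n,p}$ of the finite-level Plücker rings $R_{n,p} := k[\tilde{\Gr}(p, V^*_{n,p})]$. A direct computation with the dual surjections $\tilde{\xi}$ and $\tilde{\theta}$ shows that the standard Plücker coordinate $p_I$ at level $(n,p)$ (for $I \subset \{-n,\ldots,-1,1,\ldots,p\}$ of size $p$) is identified with $p_I$ at level $(n+1,p)$ and with $\pm p_{I \cup \{p+1\}}$ at level $(n,p+1)$. Consequently the Plücker coordinates in $R$ are parametrized by pairs $(A,C)$ with $A \subset \bz_{<0}$ and $C \subset \bz_{>0}$ finite of equal cardinality, and $S_\infty$ acts by permuting $\bz \setminus \{0\}$, freely mixing the positive and negative halves of each pair.

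Second, I would construct the elements $F_n$ as explicit polynomial combinations of these Plücker coordinates, each living at a carefully chosen finite level $(N_n,p_n)$. A natural candidate is to take $F_n$ to be a product (or a small signed sum of products) of low-rank Plücker coordinates whose combined index data exhibits a combinatorial complexity invariant --- such as the cardinality of the total support, or a matching-type shape on $2n$ marked points --- that grows linearly in $n$ and is robust under the $S_\infty$-action on $(A,C)$-pairs.

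Third, to prove $F_n \notin J_{n-1}$, I would rule out any equality $F_n = \sum_{j,\alpha} r_{j,\alpha}\,(\sigma_{j,\alpha} \cdot F_j)$ with $j<n$, $\sigma_{j,\alpha} \in S_\infty$, and $r_{j,\alpha} \in R$. The strategy is to pull the supposed relation back to a sufficiently large diagonal finite-level ring $R_{N,N}$, in which all the $F_j$'s, their $S_\infty$-translates, and the $r_{j,\alpha}$ become visible and $S_\infty$ factors through $S_N$. There, one controls the image of $J_{n-1}$ by a representation-theoretic or combinatorial invariant (for instance, the $S_N$-isotypic components appearing, a graded piece dimension, or a structural feature preserved by the Plücker relations) and shows it cannot contain the image of $F_n$. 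Letting $N$ range over all large values then rules out $F_n \in J_{n-1}$ in $R$.

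The main obstacle is precisely this non-containment. The Plücker relations permit many nonobvious rewritings and $S_\infty$-shuffling is quite powerful, so one must choose $F_n$ and the distinguishing invariant robustly enough that no combination of $S_\infty$-translates of earlier $F_j$'s, multiplied by arbitrary elements of $R$, can equal $F_n$ modulo the Plücker ideal. The bulk of the argument will lie in this comparison, likely requiring a somewhat delicate interaction between the Plücker syzygies and the precise combinatorial invariant attached to the chosen family $\{F_n\}_{n \geq 1}$.
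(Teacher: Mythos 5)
Your proposal does not address the statement at all. The statement is Rota's conjecture (Conjecture \ref{conj2.1}): for each finite field $k$ there are only finitely many excluded minors for the class of $k$-representable matroids. This is a deep open/announced result in matroid theory --- the paper quotes it purely as background, offers no proof, and attributes its (still unpublished in full) resolution to Geelen, Gerards, and Whittle via the Matroid WQO Theorem \ref{thm2.2}. What you have sketched instead is a strategy for the paper's \emph{main theorem} (Theorem \ref{thm6}): the construction of a strictly ascending chain of $S_\infty$-stable ideals in $k[\tilde{\Gr}(\infty/2, V^*_\infty)]$. That is a different statement, and in fact it points in the opposite direction: producing such a chain is a \emph{counterexample} to the strong ideal-theoretic form of Draisma's conjecture, whereas the only logical bridge the paper draws toward matroid theory (Theorem \ref{thm2.13}) runs from topological $S_\infty$-noetherianity \emph{to} well-quasi-ordering of representable matroids. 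A non-stabilizing chain of ideals neither proves nor disproves Rota's conjecture.

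Even read charitably as an outline of Theorem \ref{thm6}, your proposal stops short of the actual content: you never specify the elements $F_n$ or the ``combinatorial complexity invariant,'' and you acknowledge that the entire difficulty lies in ruling out $F_n \in J_{n-1}$ modulo the Pl\"ucker ideal. The paper resolves exactly this by taking $F_n = a_n b_n$ to be explicit degree-two monomials in the Pl\"ucker coordinates (equation \eqref{eqn12}), evaluating the putative relation on the orbit of the big cell under the opposite unipotent radical $U^-_{m-1}$ (Lemma \ref{lem5}), and deriving a contradiction from a cardinality count on $\sigma(A \setminus B)$ versus $\sigma(A)\setminus\sigma(B)$. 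If your intent was to prove Theorem \ref{thm6}, you would need to supply those missing ingredients; if your intent was to prove Conjecture \ref{conj2.1}, you would need an entirely different argument, and none is known short of the Geelen--Gerards--Whittle program.
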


Rota's conjecture is reminiscent of the classical Generalized
Kuratowski Theorem \cite{Kur}. 
As part of the {\it Graph Minors Project},
Neil Robertson and Paul Seymour were able to further generalize the Generalized
Kuratowski Theorem to obtain the WQO ({\it Well Quasi Ordering}) Theorem
 stated below (cf. \cite{RS}).  Their results were published
in a series of twenty three journal papers totaling
more than 700 pages from 1983 to 2004. 
Diestel, in his book on graph theory \cite{Di}, says that
this theorem dwarfs any other result in graph
theory and may doubtless be counted among the
deepest theorems that mathematics has to offer.

\begin{theorem}\label{thm2.1}
 (WQO). Each
minor-closed class of graphs has only finitely many
excluded minors.

Equivalently,  in any infinite set $S$ of graphs, there must be a pair of graphs one of which is a minor of the other.
\end{theorem}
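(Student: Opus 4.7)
My plan is first to reduce the finite-excluded-minor statement to the well-quasi-ordering (WQO) formulation, and then attack the WQO itself through a layered induction whose structural heart is the excluded-minor structure theorem. For the equivalence of the two formulations I would argue: if $\{G_i\}_{i\ge 1}$ is an infinite antichain under the minor order, then the class of graphs containing no $G_i$ as a minor is minor-closed with infinitely many excluded minors (after paring down to minimal elements); conversely, the excluded-minor set of any minor-closed class is an antichain by definition, so infiniteness of the excluded set yields an infinite antichain. Modulo routine bookkeeping these reductions are purely combinatorial.

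The core task is then to prove that the class of all finite graphs is well-quasi-ordered by the minor relation. I would proceed by an induction on a fixed ``obstruction'' graph $H$, proving simultaneously that graphs avoiding $H$ as a minor form a WQO class. The base case is Kruskal's tree theorem, which establishes WQO for finite trees (and more generally for trees whose vertices carry labels from a WQO). Using Kruskal as a black box one obtains WQO for graphs of bounded tree-width by encoding each such graph as a labeled tree-decomposition of bounded width and invoking Higman's lemma on the labels, together with a standard extraction argument to pass from WQO of tree-decompositions to WQO of the underlying graphs under the minor order.

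The inductive step requires the Graph Minors Structure Theorem: for every graph $H$ there is an integer $k=k(H)$ such that every graph with no $H$-minor admits a tree-decomposition of bounded adhesion whose torsos are $k$\emph{-nearly embeddable} into a surface of Euler genus at most $k$, meaning embeddable modulo at most $k$ apex vertices and at most $k$ vortices of depth at most $k$. Granting this structural decomposition, one transports WQO from the pieces to the whole class: WQO for graphs embedded in a fixed surface is proved by induction on Euler genus via surface-cut and reduction arguments, WQO for the vortices follows from a path-decomposition analogue of Kruskal, and the global tree-decomposition is handled by a minor-respecting version of Kruskal applied to the decomposition tree with torsos as labels.

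The main obstacle by a wide margin is the structure theorem itself, together with the WQO statement for graphs of bounded Euler genus carrying vortices; this is precisely the technical content that fills the twenty-three-paper series of Robertson and Seymour. Within my plan the most delicate ingredient is proving that labeled \emph{segmented linkages} inside vortices are WQO, which demands a careful transfinite induction and a version of Higman's lemma adapted to the non-tree-like attachments between consecutive vortex segments. Any honest attempt to expand these paragraphs into a complete proof would recapitulate hundreds of pages of \cite{RS}, so my proposal is essentially a road map through that series rather than an independent argument.
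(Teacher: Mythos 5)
This theorem is not proved in the paper at all: it is quoted, with a citation to \cite{RS}, as the culminating result of Robertson and Seymour's Graph Minors series, and serves only as background motivation for the matroid and infinite-Grassmannian statements that the paper actually addresses. There is therefore no in-paper proof to compare your proposal against. The one piece of your write-up that could be checked line by line is the equivalence of the two formulations, and that part is correct: the minor order on finite graphs is well-founded, so WQO is equivalent to the nonexistence of an infinite antichain; the excluded minors of a minor-closed class form an antichain, and conversely each member of an infinite antichain $\{G_i\}$ is minor-minimal outside the class of graphs with no $G_i$-minor (if a proper minor $H$ of $G_i$ contained some $G_j$ as a minor, then $j\neq i$ would violate the antichain property and $j=i$ would make $G_i$ a proper minor of itself).

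Everything past that is, as you candidly admit, a road map rather than a proof. Your outline does track the genuine architecture of \cite{RS} --- Kruskal's tree theorem as the base case, WQO for bounded tree-width via Higman's lemma, the excluded-minor structure theorem with surfaces, apex vertices and vortices, and the transfer of WQO from torsos to the whole graph along a tree-decomposition --- but the structure theorem and the WQO statement for nearly-embeddable graphs carry essentially all of the mathematical content and are simply invoked. So there is a gap in the sense that nothing beyond the equivalence is actually established; within the context of this paper that leaves the theorem in exactly the status it already has there, namely a citation to the literature, and your sketch should not be read as an independent argument.
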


Then, 
Robertson and Seymour proposed ideas for
extending their Graph Minors Project to matroids. The challenge was taken up by Jim Geelen, Bert Gerards, and Geoff Whittle.  Though  it is
not true that the WQO Theorem extends to all
matroids. However, 
after extensive work for several years,  they (Geelen et al.) announced the following slightly weaker  theorem (cf. \cite[Theorem 6]{GGW}) significantly extending the WQO theorem to matroids.

\begin{theorem}  \label{thm2.2}  (Matroid WQO Theorem). For each
finite field $k$ and each minor-closed class of $k$-representable matroids, there are only finitely
many $k$-representable excluded minors.
\end{theorem}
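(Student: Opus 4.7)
The plan is to import Robertson and Seymour's Graph Minors strategy into the setting of $k$-representable matroids. The proof naturally has three pillars: (i) a well-quasi-ordering statement within bounded branch-width, (ii) a matroid grid (excluded-minor) theorem, and (iii) a structure theorem for any proper minor-closed subclass of $k$-representable matroids. The high-level idea is the familiar dichotomy: if a minor-closed class $\mathcal{M}$ has bounded branch-width then (i) directly yields WQO; otherwise (ii) forces $\mathcal{M}$ to contain a large highly-structured minor, which by (iii) controls the whole class in terms of representations over a proper subfield of $k$, allowing an induction on $|k|$.

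First I would make precise the notion of matroid branch-width and prove that, for each fixed integer $w$ and fixed finite $k$, the class of $k$-representable matroids of branch-width at most $w$ is WQO under the minor order. The model is the graph-theoretic WQO for bounded tree-width: encode each matroid via a branch decomposition as a labelled tree whose nodes carry bounded-rank local representability data, and then apply a Kruskal-type tree embedding (Higman's lemma) on the finite alphabet supplied by $k$-representability. For the grid theorem (ii), I would develop the theory of tangles in matroids as obstructions to small branch decompositions, and use tangle navigation arguments to extract an unavoidable minor from any matroid of sufficiently large branch-width, for instance a large projective geometry $\mathrm{PG}(n,k')$ over a subfield $k'\subseteq k$ or a large Dowling-type matroid.

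The heart of the proof is the structure theorem (iii): every matroid in a proper minor-closed subclass of $k$-representable matroids should admit a tree decomposition of bounded adhesion whose pieces are bounded-rank perturbations of matroids representable over a proper subfield of $k$, together with a controlled bounded basic family. The main obstacle is precisely this step. The graph analogue already required Robertson and Seymour roughly a decade; in the matroid setting the difficulties are worse because matroid connectivity behaves differently from graph connectivity, there is no clean vertex notion (so apex-type operations must be replaced by bounded-rank perturbations), and the lattice of subfields of $k$ must be woven into the argument so that the induction on $|k|$ actually closes. Producing a structural characterization that is simultaneously strong enough to drive the WQO induction and weak enough to actually be true is the crux of the whole program, and is where all the genuinely novel matroid-theoretic input must appear.
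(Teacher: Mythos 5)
This statement is not proved in the paper at all: the paper cites it to Geelen, Gerards, and Whittle \cite{GGW}, explicitly quoting them to the effect that the result was announced and that the full write-up would ``take a few years,'' so the paper offers no proof for you to be compared against. Your sketch does correctly identify the skeleton of the GGW program (which deliberately mirrors Robertson--Seymour): (i) WQO of $k$-representable matroids of bounded branch-width, proved via a Kruskal/Higman-type tree-embedding argument on branch decompositions with bounded-rank labels over the finite field; (ii) a matroid grid theorem, obtained via tangle theory, giving an unavoidable highly structured minor (projective geometry or Dowling-type) once the branch-width is large; and (iii) a structure theorem for proper minor-closed subclasses of $k$-representable matroids, involving tree decompositions of bounded adhesion whose parts are bounded-rank perturbations of matroids representable over a subfield, driving an induction on $|k|$.

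However, as you candidly acknowledge, step (iii) is ``the crux of the whole program, and is where all the genuinely novel matroid-theoretic input must appear'' — and you do not supply it. That means your submission is a roadmap of the known strategy, not a proof. A few additional points worth flagging even at the level of the roadmap: the replacement of the graph-theoretic ``vortex'' and ``apex'' machinery by bounded-rank perturbations and low-rank modifications is genuinely delicate and is where much of the GGW effort is concentrated; the induction on the subfield lattice of $k$ requires a careful calibration of what ``highly structured'' means (projective geometries over \emph{which} subfield, with \emph{which} bounds); and one also needs an ``intertwining'' / linkage-type result to compose local WQO data across the decomposition tree, which your account omits. None of these objections mean your outline is misguided — it is the accepted architecture of the proof — but nothing here could be checked or compiled into an actual argument, and the paper itself only cites the result rather than reproving it, so there is no independent proof in the paper against which to evaluate the missing technical content.
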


According to their article \cite{GGW}, to quote them: `We are now immersed in the lengthy task of
writing up our results. Since that process will take
a few years, we have written this article offering a
high-level preview of the proof.'
\vskip1ex

The following result (communicated to us by J. Draisma), which is fairly easy to prove, provides a direct  bridge between  Conjecture \ref{conj2.8} and Thorem \ref{thm2.2} once we observe that $\theta^{n, p+1}: \wedge^{p+1} (V_{n, p+1}^*) \to  \wedge^{p} (V_{n, p}^*)$ and  $\xi^{n+1, p}: \wedge^{p} (V_{n+1, p}^*) \to  \wedge^{p} (V_{n, p}^*)$,
at the level of matroids, correspond to contraction and deletion respectively.

\begin{theorem}  \label{thm2.13}Let $k$ be a finite field and assume that the set of $k$-points of the affine infinite Grassmannian  $\tilde{\Gr}(\infty/2, V^*_\infty)$, equipped with the Zariski topology, is $S_\infty$-noetherian.
Then, matroids representable over $k$ are well quasi ordered by the minor order.
\end{theorem}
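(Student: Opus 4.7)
I will argue the contrapositive: starting from an infinite antichain $M_1, M_2, \ldots$ of $k$-representable matroids in the minor order, I will produce a strictly descending chain of $S_\infty$-stable Zariski-closed subsets of $\tilde{\Gr}(\infty/2, V^*_\infty)(k)$. Throughout, write $\pi_{n,p}: \tilde{\Gr}(\infty/2, V^*_\infty) \twoheadrightarrow \tilde{\Gr}(p, V^*_{n,p})$ for the structure projection. My first step is to replace the antichain by one consisting of coloop-free and loopless matroids: decomposing $M_i = N_i \oplus (c_i \text{ coloops}) \oplus (l_i \text{ loops})$ with $N_i$ the matroid obtained from $M_i$ by deleting all loops and coloops, Dickson's lemma in $\mathbb{Z}_{\geq 0}^2$ extracts a subsequence on which both $c_i$ and $l_i$ are coordinate-wise nondecreasing. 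If some $N_i \leq N_j$ held for $i < j$ in this subsequence, then $M_i$ would sit inside $M_j$ as a minor (first delete $c_j - c_i$ coloops and $l_j - l_i$ loops, then take a minor of $N_j$ equal to $N_i$), contradicting the antichain property; hence the $N_i$ themselves form a coloop/loop-free antichain, and after relabeling I assume each $M_i$ is already of this form.

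Next I would lift each $M_i$ to a $k$-point of the inverse limit. Fix a $p_i \times (n_i + p_i)$ matrix $A_i$ over $k$ representing $M_i$, giving a point of $\tilde{\Gr}(p_i, V^*_{n_i, p_i})(k)$. For every $(n, p) \geq (n_i, p_i)$, extend $A_i$ by adjoining an identity block in the new positive coordinates $x^*_{p_i+1}, \ldots, x^*_p$ and zero columns in the new negative coordinates $x^*_{-n}, \ldots, x^*_{-n_i - 1}$; at the matroid level this attaches coloops at the new positive positions and loops at the new negative positions. Since $\tilde{\theta}$ and $\tilde{\xi}$ realize contraction and deletion of a single element, and since contracting a coloop or deleting a loop leaves the rest of the matroid unchanged, these lifts are compatible with the inverse system and assemble into a $k$-point $Q_i \in \tilde{\Gr}(\infty/2, V^*_\infty)(k)$.

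Set $Z_i := \overline{S_\infty \cdot \{Q_j : j \geq i\}}$; each $Z_i$ is closed and $S_\infty$-stable, and $Z_1 \supseteq Z_2 \supseteq \cdots$. Because $k$ is finite, every $\tilde{\Gr}(p, V^*_{n,p})(k)$ is a finite set with discrete Zariski topology (any function on a finite-dimensional affine $k$-space is polynomial), so $\tilde{\Gr}(\infty/2, V^*_\infty)(k)$ is profinite and $\overline{S} = \bigcap_{(n,p)} \pi_{n,p}^{-1}(\pi_{n,p}(S))$ for any subset $S$. Consequently $Q_i \in Z_{i+1}$ iff for every $(n, p)$ there exist $\sigma \in S_\infty$ and $j \geq i+1$ with $\pi_{n,p}(Q_i) = \pi_{n,p}(\sigma Q_j)$. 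I would test this at $(n, p) = (n_i, p_i)$: the left-hand side is $[A_i]$, whose matroid is $M_i$, while unwinding the dictionary between $\tilde{\theta}, \tilde{\xi}$ and matroid contraction/deletion shows that the matroid of $\pi_{n_i, p_i}(\sigma Q_j)$ is always of the form $(\text{a minor of } M_j) \oplus (\text{coloops}) \oplus (\text{loops})$. Equating isomorphism classes and invoking that $M_i$ is coloop-free and loopless forces $M_i$ to equal a minor of $M_j$, contradicting the antichain hypothesis. Hence $Q_i \in Z_i \setminus Z_{i+1}$, so $Z_1 \supsetneq Z_2 \supsetneq \cdots$ refutes $S_\infty$-noetherianity.

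The technical heart of the argument is the combinatorial identification in the last step: for each $\sigma \in S_\infty$ one must track where the finite support of the lifted matroid of $Q_j$ lands inside the coordinate window $\{-n_i, \ldots, -1, 1, \ldots, p_i\}$ and verify that the projection always produces the clean decomposition $(\text{minor of } M_j) \oplus \text{coloops} \oplus \text{loops}$. Combined with the coloop/loop-free reduction of the first step, this rules out the otherwise problematic scenario in which a trivial direct summand on the $M_i$ side gets absorbed into a matching summand on the $M_j$ side, and it is where most of the bookkeeping lives.
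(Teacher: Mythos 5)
The paper does not actually contain a proof of Theorem \ref{thm2.13}: it is stated as a result communicated by Draisma, described as ``fairly easy to prove,'' and the body of the paper is devoted entirely to the counterexample (Theorem \ref{thm6}). So there is no in-paper argument to compare against, and I can only assess your proof on its own merits.

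Your argument looks essentially correct and complete. The contrapositive strategy is the right one; the Dickson's-lemma preprocessing to strip loops and coloops is exactly what is needed so that the final matroid identification forces a genuine minor relation rather than one hidden by trivial direct summands; the padding construction $Q_i$ is compatible with the inverse system precisely because $\tilde\theta$ contracts $x_{p+1}$ (a coloop in your lift) and $\tilde\xi$ deletes $x_{-(n+1)}$ (a loop in your lift); and the identification $\overline{S}=\bigcap_{(n,p)}\pi_{n,p}^{-1}(\pi_{n,p}(S))$ is valid because every element of $k[\tilde{\Gr}(\infty/2,V^*_\infty)]$ is pulled back from a finite level where the $k$-point set is finite (hence discrete), so the Zariski topology on $k$-points is the profinite inverse-limit topology. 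Finally, the key computation is correct: direct sums are preserved under deletion and contraction, contracting a coloop and deleting a loop simply remove them, while contracting a loop (via $\theta$) or deleting a coloop (via $\xi$) sends the Plücker vector to $0\neq\omega_i$; so a nonzero $\pi_{n_i,p_i}(\sigma Q_j)$ always has matroid of the form $(\text{minor of }M_j)\oplus\text{coloops}\oplus\text{loops}$, and equality with the loop/coloop-free $M_i$ forces $M_i\leq M_j$.

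Two cosmetic caveats. First, the logically clean object to extract is a \emph{bad sequence} (no $i<j$ with $M_i\leq M_j$) rather than an antichain; these coincide for matroids because proper minors strictly decrease ground-set size, but your preprocessing step only establishes the bad-sequence direction, which is all you use, so nothing breaks. Second, the compatibility of the lifts $\omega_i^{(n,p)}=v_1\wedge\cdots\wedge v_{p_i}\wedge x^*_{p_i+1}\wedge\cdots\wedge x^*_p$ under $\theta$ is only up to the sign $(-1)^p$, so you need to insert a consistent sign convention (or note it is irrelevant in characteristic $2$) for $Q_i$ to be a genuine point of the inverse limit; this is routine bookkeeping but worth a sentence.
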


\noindent
{\bf Acknowledgements:} We are indebted to Jan Draisma for explaining to us his Conjecture \ref{conj2.8},  showing its significance via his Theorem \ref{thm2.13} to the Matroid WQO Theorem \ref{thm2.2}, providing all the references  \cite{Di}, \cite{GGW}, \cite{Kur}, \cite{RS}, \cite{Ro}. This work was completed while the author was visiting the 
Institut des Hautes \'Etudes Scientifiques (Bures-sur-Yvette, France) during the fall semester of 2023, hospitality of which is gratefully acknowledged. 
\section{Infinite Grassmannian and the Matroid Minor Conjecture}

The base field in this note is any (including finite)  field.

\begin{definition} \label{defA.1} {\rm For positive integers $n, p$, let $V_{n,p}$ be the vector space over  $k$ with basis 
$$\{x_{-n}, x_{-n+1}, \dots, x_{-1}, x_1, x_2, \dots, x_p\}.$$
Let $\{x^*_{-n}, x^*_{-n+1}, \dots, x^*_{-1}, x^*_1, x^*_2, \dots, x^*_p\}$ be the dual basis of the dual vector space $V_{n,p}^*$. 

  Define the linear maps
\[\eta= \eta^{n+1, p}: \wedge^p(V_{n,p}) \hookrightarrow  \wedge^p(V_{n+1,p})\]
induced from the standard inclusion of the bases
and
\[\beta= \beta^{n, p+1}:  \wedge^p(V_{n,p}) \hookrightarrow  \wedge^{p+1}(V_{n,p+1}),\,\,\, \omega \mapsto \omega\wedge x_{p+1}.\]
Dually, we get surjective maps 
\[\xi= \xi^{n+1, p}: \wedge^p(V^*_{n+1,p}) \twoheadrightarrow  \wedge^p(V^*_{n,p})\]
and 
\[\theta=\theta^{n, p+1}:  \wedge^{p+1}(V^*_{n,p+1}) \twoheadrightarrow  \wedge^{p}(V^*_{n,p}),\,\,\, \omega \mapsto  i_{x_{p+1}}\omega,\]
where $i$ is the interior multiplication. 
Let $\Gr(p, V^*_{n,p})$ be the Grassmannian of $p$-planes in $V^*_{n,p}$ (For generalities on Grassmannians, see \cite[$\S$III.2.7]{EH}.) . Then, we have the Pl\"ucker embedding: 
\[\iota: \Gr(p, V^*_{n,p})\hookrightarrow \mathbb{P}\left(\wedge^p(V^*_{n,p})\right), \,\, A \mapsto \wedge^p(A), \,\,\text{for $A \in  \Gr(p, V^*_{n,p})$}.\]
Let $\tilde{\Gr}(p, V^*_{n,p})$ be the corresponding affine cone (under the above embedding). Thus, we get an embedding
\[\tilde{\Gr}(p, V^*_{n,p}) \hookrightarrow \wedge^p(V^*_{n,p}),\]
where the image consists of the decomposable vectors (including the vector $0$). This makes $\tilde{\Gr}(p, V^*_{n,p})$ a closed irreducible (affine) subvariety of $ \wedge^p(V^*_{n,p})$. 
Now, the maps
\begin{equation} \label{eqn1}
\wedge^p (V_{n+1, p}^*)\xleftarrow{\theta^{n+1, p+1}} \wedge^{p+1} (V_{n+1, p+1}^*)\xrightarrow{\xi^{n+1, p+1}} \wedge^{p+1} (V_{n, p+1}^*)
\end{equation}
 induce the restriction maps which are surjective:
\begin{equation} \label{eqn2}
\tilde{\Gr}(p, V^*_{n+1,p})\xleftarrow{\tilde{\theta}}  \tilde{\Gr}(p+1, V^*_{n+1,p+1})     \xrightarrow{\tilde{\xi}} \tilde{\Gr}(p+1, V^*_{n,p+1}).
\end{equation}
To prove the existence of $\tilde{\theta}$, we can write 
$$v_1\wedge \dots \wedge v_{p+1} = v'_1\wedge \dots \wedge v'_p\wedge (v'_{p+1}+ \alpha x^*_{p+1}),\,\,\text{where $v'_i(x_{p+1})=0 \forall 1\leq i\leq p+1$},$$
for some $\alpha\in k$.  Thus, 
$$i_{x_{p+1}}(v_1\wedge \dots \wedge v_{p+1})=\pm \alpha v'_1\wedge \dots \wedge v'_p$$
and hence  $\theta$ induces the map  $\tilde{\theta}$ on the corresponding cones of Grassmannians. The existence of $\tilde{\xi}$ is trivial to see. 

Define a partial order $\leq$ on $\mathbb{Z}_{\geq 1}\times \mathbb{Z}_{\geq 1}$ by
\begin{equation} \label{eqn3}
(n,p) \leq (m, q)\,\,\,\text{if $n\leq m$ and $p\leq q$}. 
\end{equation}
We have a surjective map between the affine varieties induced from the maps $\tilde{\theta}$ and $\tilde{\xi}$:
\begin{equation} \label{eqn4}
\tilde{\Gr}(q, V^*_{m, q})\to  \tilde{\Gr}(p, V^*_{n,p})\,\,\,\text{for  $(n,p) \leq (m, q)$}.
\end{equation}

The above map is well defined since the following diagram is commutative:
\begin{equation}
\label{eqn5}
\xymatrix{
\tilde{\Gr}(p+1, V^*_{n+1,p+1})\ar[r]^{\tilde{\xi}^{n+1, p+1}} \ar[d]^{\tilde{\theta}^{n+1, p+1}} & \tilde{\Gr}(p+1, V^*_{n,p+1}) \ar[d]^{\tilde{\theta}^{n, p+1}} \\
 \tilde{\Gr}(p, V^*_{n+1,p})  \ar[r]^{\tilde{\xi}^{n+1, p}}&  \tilde{\Gr}(p, V^*_{n,p}).
}
\end{equation}

Define the affine schemes:
\begin{equation}
\label{eqn6} 
\tilde{\Gr}(\infty/2, V^*_\infty):= \varprojlim_{(n, p)}\,  \tilde{\Gr}(p, V^*_{n,p}),
\end{equation}
and 
\begin{equation}\label{eqn6.5} 
\wedge^{\infty/2} (V^*_\infty):= \varprojlim_{(n, p)}\,  \wedge^p(V^*_{n,p}).
\end{equation}
We call $\tilde{\Gr}(\infty/2, V^*_\infty)$ the {\it affine infinite Grassmannian}. 

Then, the corresponding affine coordinate rings are given by:
\begin{equation}
\label{eqn7} 
k[\tilde{\Gr}(\infty/2, V^*_\infty)]= \varinjlim_{(n, p)}\,  k[\tilde{\Gr}(p, V^*_{n,p})],
\end{equation}
and
\begin{equation}
\label{eqn7.5} 
k[\wedge^{\infty/2} (V^*_\infty)]= \varinjlim_{(n, p)}\,  k[\wedge^p(V^*_{n,p})]=  \varinjlim_{(n, p)}\,  S^\bullet(\wedge^p(V_{n,p})),\end{equation}
where $S^\bullet$ is the symmetric algebra.

The multiplicative group $k^*$ acts on $\wedge^p(V^*_{n,p})$ via multiplication by $z\in k^*$. Clearly, this
$k^*$-action preserves $\tilde{\Gr}(p, V^*_{n,p})$. Moreover, $\tilde{\xi}$ and $\tilde{\theta}$ both commute with this $k^*$-action. Thus, we get a $k^*$-action on $\tilde{\Gr}(\infty/2, V^*_\infty)$. Further, $\tilde{\xi}^{n+1, p+1}$ commutes with the 
standard $\GL(V_{n, p+1})$-actions (considering $\GL(V_{n, p+1})$ as canonically embedded in $\GL(V_{n+1, p+1})$)
and $\tilde{\theta}^{n+1, p+1}$ commutes with the standard $\GL(V_{n+1, p})$-actions (again considering $\GL(V_{n+1, p})$ as canonically embedded in $\GL(V_{n+1, p+1})$). In particular, $\tilde{\xi}^{n+1, p+1}$ commutes with the 
$\N(V_{n, p+1})$-actions and $\tilde{\theta}^{n+1, p+1}$ commutes with the $\N(V_{n+1, p})$-actions, where $\N(V_{n+1, p})$ denotes the normalizer of the standard maximal torus in $\SL(V_{n+1, p})$. 

Define the group 
$$\GL(\infty):= \varinjlim_{(n, p)}\, \GL(V_{n, p}),$$
and its subgroup
$$\N(\infty):= \varinjlim_{(n, p)}\, \N(V_{n, p}).$$
Then, $\GL(\infty)$ acts on $\tilde{\Gr}(\infty/2, V^*_\infty)$ as follows. Take $g\in \GL(V_{n, p})$. Now, 
\begin{equation}
\label{eqn8} 
\tilde{\Gr}(\infty/2, V^*_\infty):= \varprojlim_{(m,q)\geq (n, p)}\,  \tilde{\Gr}(q, V^*_{m,q}).
\end{equation}
Each variety on the right is acted upon by $\GL(V_{n, p})$ and all the maps $\tilde{\xi}$ and $\tilde{\theta}$ are $\GL(V_{n, p})$-equivariant maps. Thus, $g\in \GL(V_{n, p})$ acts on $\tilde{\Gr}(\infty/2, V^*_\infty)$ for any pair $(n,p)$. These actions clearly combine to give an action of  $\GL(\infty)$ on  $\tilde{\Gr}(\infty/2, V^*_\infty)$. 

The standard action of $\GL(V_{n, p})$ on $\wedge^p(V^*_{n,p})$ commutes with the 
above $k^*$-action and hence so is the $k^*$-action  on $\tilde{\Gr}(p, V^*_{n,p})$ commutes with the standard $\GL(V_{n,p})$-action. Thus, we get a  $k^*$-action on  $\tilde{\Gr}(\infty/2, V^*_\infty)$ commuting with the action of $\GL(\infty)$. 

Define a bijection
$$ \mu: -\mathbb{N} \sqcup \mathbb{N} \to \mathbb{N}, -n \mapsto 2n,\,\, p\mapsto 2p-1,\,\,\text{for $n, p\in \mathbb{N}$},$$
where $\mathbb{N}$ is the set of positive integers $\{1, 2, \dots\}$. This gives rise to a {\it well order} on $ -\mathbb{N} \sqcup \mathbb{N} $ transporting the standard well order on $\mathbb{N}$ via $\mu$. Write a basis of $\wedge^p(V_{n,p})$ as follows:
$$x_{i_1}\wedge \dots \wedge x_{i_p},\,\,\text{where $i_j\in \{-n, -(n-1), \dots,-1, 1, \dots, p\}$}$$
so that $i_1< i_2 < \dots < i_p$ in the above well order. Now, define
$${\bf x_i}:= x_{i_1}\wedge \dots \wedge x_{i_p}< x_{j_1}\wedge \dots \wedge x_{j_p}$$
 under the lexicographic order reading from the left.}
 \end{definition}
The following lemma is clear.
\begin{lemma} \label{lem1}The above order on the basis of $\wedge^p(V_{n,p})$ is a well order.
\end{lemma}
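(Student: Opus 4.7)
The lemma is essentially a standard fact about lexicographic orders, so the proof plan is short and structural rather than calculational.

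First I would verify that the specified relation is at least a strict total order on the basis. The chosen labels $i_1<i_2<\cdots<i_p$ are strictly increasing in the well order transported to $-\mathbb{N}\sqcup\mathbb{N}$ via the bijection $\mu$, so distinct basis vectors correspond to distinct strictly increasing $p$-tuples, and the lexicographic comparison from the left either finds a leftmost index of disagreement or deems the two tuples equal. This gives trichotomy and transitivity immediately, so the relation is a total order.

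The substantive point is that every nonempty subset $S$ of basis elements has a least element. For this I would use the standard reduction of the lex order on fixed-length tuples to the well order on the coordinate set: let $A_1\subseteq -\mathbb{N}\sqcup\mathbb{N}$ be the set of first indices of elements of $S$. Since the order on $-\mathbb{N}\sqcup\mathbb{N}$ is, by construction, the pullback under $\mu$ of the standard well order on $\mathbb{N}$, it is itself a well order, so $A_1$ has a least element $a_1$. Restrict $S$ to the subset $S_1$ of basis vectors whose first index equals $a_1$, form the set $A_2$ of their second indices, extract its least element $a_2$, and iterate. Because every tuple has exactly $p$ entries, this process terminates after $p$ steps and yields the (necessarily unique) lex-least element $x_{a_1}\wedge\cdots\wedge x_{a_p}\in S$.

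I do not anticipate any genuine obstacle: the underlying vector space $V_{n,p}$ is finite-dimensional, so the basis itself is finite and any total order on it is automatically a well order. The argument above is included only so that the proof generalizes transparently to the direct-limit settings appearing elsewhere in the paper, where the basis ceases to be finite but where the same lex-order-on-fixed-length-tuples construction is presumably intended to be used. The only thing to be careful about is recording that the well order on $-\mathbb{N}\sqcup\mathbb{N}$ is precisely the transport via $\mu$ of the standard one on $\mathbb{N}$, so that the induction on coordinates terminates after the expected $p$ steps.
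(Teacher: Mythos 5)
Your proposal is correct. The paper itself offers no proof, declaring the lemma ``clear,'' and the simplest justification is exactly the one you note at the end: the basis of $\wedge^p(V_{n,p})$ is finite (since $V_{n,p}$ is finite-dimensional), so any total order on it is automatically a well order; establishing that the lexicographic comparison is a total order is then all that is needed. Your more elaborate induction-on-coordinates argument, extracting minima of successive index sets, is also sound and is the standard way to see that a lex order on length-$p$ tuples over a well-ordered set is a well order; it buys you a proof that would continue to apply if the index set were infinite, though that generality is not needed here and is not what the paper is implicitly relying on (the paper passes to the infinite setting via the compatibility Lemmas \ref{lem2} and \ref{lem3} rather than by well-ordering an infinite basis directly).
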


\begin{lemma}\label{lem2} Under the embedding $\beta:  \wedge^p(V_{n,p}) \hookrightarrow  \wedge^{p+1}(V_{n,p+1}),\, \omega \mapsto \omega\wedge x_{p+1}$, the above well ordering on the basis of $ \wedge^{p+1}(V_{n,p+1})$ restricts to the well ordering on the basis of  $\wedge^p(V_{n,p})$. 
\end{lemma}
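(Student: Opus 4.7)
The plan is to identify each basis element of $\wedge^p(V_{n,p})$ with its strictly increasing (in the well order on $-\bn\sqcup\bn$) index tuple, and to reduce the lemma to a combinatorial statement about lexicographic order on such tuples. First I observe that for a basis element $\omega = x_{i_1}\wedge\cdots\wedge x_{i_p}$ with $i_1<\cdots<i_p$, the wedge $\omega\wedge x_{p+1}$ equals $\pm$ the basis element of $\wedge^{p+1}(V_{n,p+1})$ indexed by the sorted tuple $(k_1,\ldots,k_{p+1}):=\mathrm{sort}(i_1,\ldots,i_p,p+1)$. Since $p+1\notin\{-n,\ldots,-1,1,\ldots,p\}$, no repetition occurs, and the sign is irrelevant for the order comparison on basis elements. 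Hence the lemma reduces to showing that the map $(i_1,\ldots,i_p)\mapsto \mathrm{sort}(i_1,\ldots,i_p,p+1)$ is order-preserving with respect to the lexicographic order.

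I next formulate the underlying combinatorial statement: if $I=(i_1,\ldots,i_p)$ and $J=(j_1,\ldots,j_p)$ are strictly increasing tuples in a totally ordered set with $I<_{\mathrm{lex}}J$, and $c$ is any element not appearing in $I\cup J$, then $\mathrm{sort}(I\cup\{c\})<_{\mathrm{lex}}\mathrm{sort}(J\cup\{c\})$. Specializing to $c=p+1$ under the well order on $-\bn\sqcup\bn$ then yields the lemma at once.

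The combinatorial statement is proved by inspecting the first position of disagreement. Let $s$ be the smallest index with $i_s\neq j_s$, so $i_s<j_s$. I split into three cases according to where $c$ falls relative to $i_s$ and $j_s$: (i) $c>j_s$ (hence $c>i_s$ too); (ii) $i_s<c<j_s$; (iii) $c<i_s$ (hence $c<j_s$ too); the remaining combination $j_s<c<i_s$ is excluded by $i_s<j_s$. In each case a short inspection of the sorted tuples at positions $1,\ldots,s+1$ shows that the first disagreement of $\mathrm{sort}(I\cup\{c\})$ and $\mathrm{sort}(J\cup\{c\})$ occurs at position $s$ or $s+1$, and is supplied by $i_s<j_s$ in cases (i) and (iii) and by $i_s<c$ in case (ii). The only genuine obstacle is correctly tracking the position at which $c$ is inserted in each tuple; once that bookkeeping is done, the comparison is immediate.
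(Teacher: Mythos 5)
Your proof is correct and follows essentially the same route as the paper's: both locate the first index $s$ of disagreement between the two index tuples and then case-split on where $x_{p+1}$ is inserted relative to $i_s$ and $j_s$, reading off that the first disagreement of the sorted $(p+1)$-tuples lands at position $s$ or $s+1$. Your three-case organization (keyed only to $c$ versus $i_s$ and $j_s$) is marginally tidier than the paper's four cases (which also split on $c$ versus $i_{s-1}$), and you correctly observe that the converse comparison is automatic from injectivity and totality of the order, which the paper instead verifies separately.
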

\begin{proof} First,  let $x_{i_1}\wedge \dots \wedge x_{i_p}< x_{j_1}\wedge \dots \wedge x_{j_p}.$ Then, we claim that 
 \begin{equation} 
 \label{eqn9} x_{i_1}\wedge \dots \wedge x_{i_p}\wedge x_{p+1}< x_{j_1}\wedge \dots \wedge x_{j_p}\wedge x_{p+1}:
 \end{equation}
 
 Choose the largest $\ell \geq 0$ such that $i_1=j_1, \dots, i_\ell =j_\ell$. If $\mu(p+1)< \mu (i_\ell)$, then clearly the equation \eqref{eqn9} is true. So, assume that $\mu(p+1)> \mu (i_\ell)$. If $\mu(p+1)> \mu(j_{\ell +1}) >  \mu (i_{\ell +1})$,  then again clearly the equation \eqref{eqn9} is true.  So, assume that $\mu(i_\ell) =\mu(j_\ell) < \mu(p+1) <  \mu (j_{\ell +1})$. If $\mu(i_{\ell +1}) > \mu(p+1)$,  then again the equation \eqref{eqn9} is true. So, finally assume that $\mu(i_{\ell +1}) < \mu(p+1)$ and $\mu(j_{\ell +1}) > \mu(p+1)$.  Since $\mu(i_{\ell +1}) < \mu(p+1)$,   the equation \eqref{eqn9} is true since $ \mu(j_\ell)  < \mu(p+1) <  \mu (j_{\ell +1})$.

 Conversely, if 
 \begin{equation} 
 \label{eqn10} x_{i_1}\wedge \dots \wedge x_{i_p}\wedge x_{p+1}< x_{j_1}\wedge \dots \wedge x_{j_p}\wedge x_{p+1},
 \end{equation} 
 then we assert that $ x_{i_1}\wedge \dots \wedge x_{i_p}< x_{j_1}\wedge \dots \wedge x_{j_p}:$ For, otherwise, assume that 
 $ x_{i_1}\wedge \dots \wedge x_{i_p}> x_{j_1}\wedge \dots \wedge x_{j_p}$. This implies  $x_{i_1}\wedge \dots \wedge x_{i_p}\wedge x_{p+1}> x_{j_1}\wedge \dots \wedge x_{j_p}\wedge x_{p+1} $,  contradicting the equation \eqref{eqn10}. This proves the lemma. 
 \end{proof}
 The proof of the following lemma is clear. 
 \begin{lemma}\label{lem3} Under the embedding $\eta:  \wedge^p(V_{n,p}) \hookrightarrow  \wedge^{p}(V_{n+1,p}),\, \omega \mapsto \omega$, the above well ordering on the basis of $ \wedge^{p}(V_{n+1,p})$ restricts to the well ordering on the basis of  $\wedge^p(V_{n,p})$. 
\end{lemma}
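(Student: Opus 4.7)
The plan is to exploit the fact that the well order on the indexing set $-\mathbb{N}\sqcup\mathbb{N}$ is defined once and for all via the bijection $\mu$, so it does not depend on the particular pair $(n,p)$ used to form $V_{n,p}$. Since the embedding $\eta$ carries a basis element $x_{i_1}\wedge\dots\wedge x_{i_p}$ of $\wedge^p(V_{n,p})$ to the literal symbol $x_{i_1}\wedge\dots\wedge x_{i_p}$ in $\wedge^p(V_{n+1,p})$, the lemma should reduce to the observation that both the condition ``$i_1<\dots<i_p$ in the well order'' and the lexicographic comparison used to order the basis are defined purely in terms of $\mu$.

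First I would fix two basis vectors $\mathbf{x_i}=x_{i_1}\wedge\dots\wedge x_{i_p}$ and $\mathbf{x_j}=x_{j_1}\wedge\dots\wedge x_{j_p}$ of $\wedge^p(V_{n,p})$, written with $i_1<\dots<i_p$ and $j_1<\dots<j_p$ under the $\mu$-transported well order on $\{-n,\dots,-1,1,\dots,p\}$. Since this well order is by construction the restriction of the well order on the larger index set $\{-(n+1),-n,\dots,-1,1,\dots,p\}$, the same tuples remain strictly increasing in the larger ambient order, and hence $\eta(\mathbf{x_i})$ and $\eta(\mathbf{x_j})$ are exactly the corresponding basis vectors of $\wedge^p(V_{n+1,p})$ in the normal form of Definition \ref{defA.1}.

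Next I would compare $\mathbf{x_i}$ and $\mathbf{x_j}$ via the lexicographic rule: find the smallest position $t$ at which $i_t\neq j_t$ and compare $\mu(i_t)$ with $\mu(j_t)$ in $\mathbb{N}$. Because the values $\mu(i_t)$ and $\mu(j_t)$ depend only on the indices themselves and not on the ambient vector space, this comparison yields the same verdict whether carried out inside $\wedge^p(V_{n,p})$ or inside $\wedge^p(V_{n+1,p})$. Consequently $\mathbf{x_i}<\mathbf{x_j}$ in the former if and only if $\eta(\mathbf{x_i})<\eta(\mathbf{x_j})$ in the latter, which is the assertion of the lemma.

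There is essentially no obstacle here, in sharp contrast to Lemma \ref{lem2}: in that lemma, appending the new factor $x_{p+1}$ inserts an element of intermediate $\mu$-rank into both sides and forces the case analysis on where $\mu(p+1)$ falls relative to $\mu(i_{\ell+1})$ and $\mu(j_{\ell+1})$. In the present lemma, by contrast, no new index is inserted into either basis vector, so the statement is just the naturality of the lexicographic well order under enlargement of the coordinate set.
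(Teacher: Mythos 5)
Your proof is correct and takes the same approach as the paper intends: the paper simply states ``the proof of the following lemma is clear'' and omits it, and your argument supplies the expected reasoning. The key observation is exactly right---the well order on the index set $-\mathbb{N}\sqcup\mathbb{N}$ is defined once and for all via $\mu$, independently of $(n,p)$, and $\eta$ is literally the identity on basis symbols, so the increasing-order normalization and the lexicographic comparison both restrict verbatim.
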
 
 We recall the following definition from \cite[$\S$3.3]{AH}.
 \begin{definition} {\rm Let $\leq$ be a well ordering on a countable set $X$. Define the induced {\it lexicographic ordering} $\leq^o$ on the set $X^o$ of commuting monomials with terms from $X$ as follows:
 \begin{equation}\label{eqn11} 
 {\bf x} := x_1^{m_1}\dots x_a^{m_a} \leq^o {\bf y}:= x_1^{n_1}\dots x_a^{n_a} \Leftrightarrow  (m_1, \dots, m_a) \leq (n_1, \dots, n_a)  \end{equation}
 lexicographically from the left, where $x_i\in X$, $x_1 < x_2 < \dots < x_a$  and $m_i, n_i\in \mathbb{Z}_{\geq 0}$.  Then, $\leq^o$ is a term ordering on $X^o$ (cf. \cite[Example2.5]{AH}. Let a group $G$ act on $X$. Define a quasi-ordering $|_G$ on $X^o$ by
 $${\bf x} |_G {\bf y}  \Leftrightarrow \exists \sigma\in G \,\text{and} \,\, {\bf z}\in X^o: (\sigma {\bf x}) {\bf z} = {\bf y}.
 $$}
 \end{definition}
 Define the infinite symmetric group  $S_\infty :=  \varinjlim_{n}\, S_n$, where $S_n$ is the symmetric group on the symbols 
 $\{-n, -(n-1), \dots, -1, 1, 2, \dots, n\}$. Then, $S_\infty$ is canonically embedded as a subgroup of $\GL(\infty)$ obtained via the permutation matrices. 
 
 \begin{conjecture} (due to J. Draisma) \label{conj2.8} Let $k$ be any field.  
 	The affine infinite Grassmannian  $\tilde{\Gr}(\infty/2, V^*_\infty)$  is topologically $S_\infty$-noetherian, i.e., every descending chain of  
 	$S_\infty$-stable Zariski-closed subsets of  $\tilde{\Gr}(\infty/2, V^*_\infty)$
 	stabilizes. 
 	\vskip1ex
 	
 	A slightly weaker form of the conjecture states that  $\tilde{\Gr}(\infty/2, V^*_\infty)$ is  topologically $\N(\infty)
 	$-noetherian.
 	\vskip1ex
 	A stronger form of the conjecture states that any ascending chain of $S_\infty$-stable ideals in the affine coordinate ring of \,
 	$\tilde{\Gr}(\infty/2, V^*_\infty)$ stabilizes. 
 \end{conjecture}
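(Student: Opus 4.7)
The plan is to establish topological $S_\infty$-noetherianity of $\tilde{\Gr}(\infty/2, V^*_\infty)$ by translating the descending chain condition on $S_\infty$-stable Zariski-closed subsets into an ascending chain condition on $S_\infty$-stable \emph{radical} ideals in the direct limit ring $k[\tilde{\Gr}(\infty/2, V^*_\infty)]$, and then exploiting the Pl\"ucker presentation together with equivariant Gr\"obner-degeneration machinery of Aschenbrenner-Hillar and Draisma-Eggermont type. Since Theorem \ref{thm6} shows that the ideal form of the conjecture fails, any such argument must invoke reducedness in an essential way: the radical must collapse the bad ideal-theoretic chains into a finite topological one.

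First I would pass to coordinate rings: $k[\tilde{\Gr}(\infty/2, V^*_\infty)] = \varinjlim k[\tilde{\Gr}(p, V^*_{n,p})]$ is the direct limit of Pl\"ucker rings, each a quotient of $S^\bullet(\wedge^p(V_{n,p}))$ by quadratic Pl\"ucker relations. Using the compatible well orderings on Pl\"ucker bases built in Lemmas \ref{lem1}, \ref{lem2}, and \ref{lem3}, one obtains a canonical $S_\infty$-equivariant term order on the monoid $X^o$ of commuting monomials in the Pl\"ucker generators. I would then aim to show that the initial ideal of any $S_\infty$-stable radical ideal is generated by finitely many $S_\infty$-orbits with respect to the quasi-order $|_{S_\infty}$; establishing this requires a two-parameter Higman-type well-quasi-ordering lemma that treats growth in $n$ and in $p$ symmetrically. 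It is precisely here that reducedness must be invoked to prune away the nilpotent witnesses used to defeat the ideal version.

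Next I would construct an $S_\infty$-equivariant stratification of $\tilde{\Gr}(\infty/2, V^*_\infty)$ indexed by orbit types of decomposable elements of $\wedge^{\infty/2}(V^*_\infty)$. Each stratum is governed by a combinatorial datum (an infinite-rank matroid), and one argues by induction along a suitable well-quasi-order that every stratum, together with its closure, is topologically $S_\infty$-noetherian. For finite $k$ this induction could plausibly be bootstrapped from the Matroid WQO Theorem \ref{thm2.2}, which delivers precisely such a well-quasi-order on $k$-representable matroids; the remaining task is a Nullstellensatz-style density argument to lift stability from $k$-points to closed subschemes, together with an extension of the finite-field conclusion to arbitrary fields by a standard specialization argument on the generic point.

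The hard part will be controlling the two directions of the inverse limit simultaneously: Draisma's topological noetherianity results for fixed exterior powers $\wedge^q(k^\infty)$ do not directly apply because here the exterior power $p$ also grows along the limit. A second, and perhaps deeper, obstacle is the incompatibility of radicals with the transition maps $\tilde{\theta}$ and $\tilde{\xi}$: the chain produced in Theorem \ref{thm6} exploits precisely this incompatibility, so any successful proof must isolate a class of $S_\infty$-invariants that is both radical-stable at every finite stage and preserved by the direct system. It is not at present clear to me that such a class exists, and identifying it — or circumventing the need for it via a geometric detour through stratifications — is where the main difficulty lies.
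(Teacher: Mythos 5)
The statement you were asked about is a \emph{conjecture}, and the paper offers no proof of it: the topological $S_\infty$-noetherianity of $\tilde{\Gr}(\infty/2, V^*_\infty)$ is explicitly left open, and the only thing the paper actually establishes is the \emph{negative} result (Theorem \ref{thm6}) that the strong, ideal-theoretic form fails. Your text is a research program, not a proof, and by your own admission its central step is missing (``It is not at present clear to me that such a class exists''). So there is nothing here to certify. Beyond that, several of the concrete ingredients you propose are either unsupported or point in the wrong direction. First, the Aschenbrenner--Hillar route through initial ideals cannot be salvaged by ``reducedness pruning away nilpotent witnesses'': Proposition \ref{thm 4} shows that $|_{S_\infty}$ is genuinely not a well-quasi-ordering, and the antichain $\{a_nb_n\}$ consists of products of honest Pl\"ucker coordinates that do not vanish on the Grassmannian --- there is nothing nilpotent about them, and the proof of Theorem \ref{thm6} exhibits points of $\tilde{\Gr}(\infty/2, V^*_\infty)$ (in $U^-_m$-translates of the base point) separating the ideals $\I_n$. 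Whether the \emph{radicals} of the $\I_n$ stabilize is exactly the open question, and you give no mechanism for deciding it.

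Second, your proposed bootstrap from the Matroid WQO Theorem reverses the only implication the paper establishes: Theorem \ref{thm2.13} derives matroid WQO \emph{from} $S_\infty$-noetherianity of the $k$-points, not the other way around. A well-quasi-order on $k$-representable matroids controls only the combinatorial supports of Pl\"ucker vectors, whereas an $S_\infty$-stable closed subset of $\tilde{\Gr}(\infty/2, V^*_\infty)$ is cut out by polynomial relations among the coordinates themselves; nothing in the WQO theorem constrains descending chains of such subsets. Your ``Nullstellensatz-style density argument'' is also problematic over a finite field, where $k$-points are far from dense in the scheme. In short: the conjecture remains open, the paper proves only its strong form false, and your proposal neither closes the gap nor identifies a viable replacement for the failed Gr\"obner-theoretic machinery.
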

 
 \section{A counterexample to a stronger form of the Matroid Minor Conjecture}
 
 We begin first by disproving the strong form of Matroid Minor Conjecture for $\wedge^{\infty/2}(V^*_\infty)$. 
 \begin{proposition} \label{thm 4} The ring 
 $$R:=k[\wedge^{\infty/2}(V^*_\infty)] =  \varinjlim_{(n, p)}\,  S^\bullet(\wedge^p(V_{n,p}))\,\,\,\text{(cf. the identity \eqref{eqn7.5})} 
 $$ is not noetherian with respect to the $S_\infty$-stable ideals, i.e., there exists a strictly increasing sequence of $S_\infty$-stable ideals of $R$: 
 $$I_1 \subsetneq I_2 \subsetneq I_3 \subsetneq \dots .$$
 \end{proposition}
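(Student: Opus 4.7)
The plan is to exhibit an explicit infinite antichain of degree-two monomials in $R$ under the $S_\infty$-divisibility quasi-ordering, from which the strictly ascending chain of $S_\infty$-stable ideals will follow by a direct coefficient comparison. Set $W := \varinjlim_{(n,p)} \wedge^p(V_{n,p})$; since $S^\bullet$ commutes with direct limits, $R = S^\bullet(W)$ is a polynomial ring on the basis of $W$. Tracing through the transition maps $\eta$ and $\beta$, the basis of $W$ is identified with the set of \emph{admissible} subsets $S \subset -\mathbb{N} \sqcup \mathbb{N}$: those with $S \cap \mathbb{N}$ cofinite in $\mathbb{N}$, $S \cap (-\mathbb{N})$ finite, and $|\mathbb{N} \setminus S| = |S \cap (-\mathbb{N})|$. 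Write $y_S \in W$ for the basis vector indexed by the admissible set $S$. Put $A := \mathbb{N}$ (realized by $x_1 \wedge x_2 \wedge \cdots \wedge x_p$ for any $p \geq 1$) and $B_n := \{-1, -2, \ldots, -n\} \cup \{n+1, n+2, \ldots\}$ (realized by $x_{-1} \wedge x_{-2} \wedge \cdots \wedge x_{-n} \wedge x_{n+1} \in \wedge^{n+1}(V_{n, n+1})$). Define
\[ f_n := y_A \cdot y_{B_n} \in R, \qquad n \geq 1, \]
and let $I_n$ denote the $S_\infty$-stable ideal of $R$ generated by $\{f_1, \ldots, f_n\}$.

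The crux is the \emph{antichain claim}: for $n \neq m$ and every $\sigma \in S_\infty$, one has $\sigma(f_n) \neq c \cdot f_m$ for any $c \in k^*$. Since $\sigma(y_S) = \pm y_{\sigma(S)}$ and $f_n, f_m$ are products of two distinct basis variables of $W$, this reduces to showing the multiset equality $\{\sigma(A), \sigma(B_n)\} = \{A, B_m\}$ fails. I would split into two cases. First, if $\sigma(A) = A = \mathbb{N}$, then $\sigma$ preserves both $\mathbb{N}$ and $-\mathbb{N}$ setwise; the required equality $\sigma(B_n) = B_m$ restricted to $-\mathbb{N}$ gives $\sigma(\{-1, \ldots, -n\}) = \{-1, \ldots, -m\}$, forcing $n = m$ by cardinality. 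Second, if $\sigma(A) = B_m$ and $\sigma(B_n) = A = \mathbb{N}$, then $\sigma(-\mathbb{N}) = (-\mathbb{N} \sqcup \mathbb{N}) \setminus B_m$, so $\sigma(-\mathbb{N}) \cap \mathbb{N} = \{1, \ldots, m\}$. Since $\sigma(B_n) = \mathbb{N}$ contains no negatives, $\sigma(\{-1, \ldots, -n\}) \subseteq \sigma(-\mathbb{N}) \cap \mathbb{N} = \{1, \ldots, m\}$, giving $n \leq m$; and since $\sigma(\mathbb{N} \setminus \{1, \ldots, n\}) \subseteq \sigma(\mathbb{N}) \cap \mathbb{N} = \mathbb{N} \setminus \{1, \ldots, m\}$, the subset $\{1, \ldots, m\}$ of $\mathbb{N}$ must be covered entirely by $\sigma(\{-1, \ldots, -n\})$, giving $n \geq m$. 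In both cases $n = m$, establishing the antichain.

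To finish, suppose for contradiction $f_{n+1} \in I_n$, so $f_{n+1} = \sum_j c_j \, \sigma_j(f_{\pi(j)}) \, g_j$ for some $c_j \in k^*$, $\sigma_j \in S_\infty$, $\pi(j) \in \{1, \ldots, n\}$, and $g_j \in R$. Each $\sigma_j(f_{\pi(j)})$ is itself a signed degree-two monomial of $R$, so upon expanding each product $\sigma_j(f_{\pi(j)}) g_j$ into monomials, the specific monomial $f_{n+1}$ (also of degree two) can appear only when $\sigma_j(f_{\pi(j)}) = \pm f_{n+1}$ with the accompanying factor from $g_j$ being a constant. But the antichain claim forces $\pi(j) = n+1$, contradicting $\pi(j) \leq n$. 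Hence the coefficient of $f_{n+1}$ on the right-hand side is zero, whereas on the left-hand side it is $1$ — contradiction. This yields $I_n \subsetneq I_{n+1}$ for every $n \geq 1$, producing the required strictly ascending chain. The main obstacle is the second case of the antichain analysis, where one has to convert the finite-support constraint on $\sigma \in S_\infty$ into a two-sided squeeze $n \leq m \leq n$ using the characteristic cofinite-positive and finite-negative structure of admissible subsets.
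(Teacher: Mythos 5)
Your proof is correct and follows essentially the same strategy as the paper's: exhibit an explicit infinite antichain of degree-two monomials in $R$ under $S_\infty$-divisibility, and conclude that the corresponding chain of $S_\infty$-stable ideals is strictly ascending. The only (minor) differences are that the paper builds its antichain from a different pair of admissible sets (its $a_n$ and $b_n$ correspond to $\{-2\}\cup\{2,3,\dots\}$ and $\{-(n-1),\dots,-1,1\}\cup\{n+1,n+2,\dots\}$, while you use the simpler pair $\mathbb{N}$ and $\{-1,\dots,-n\}\cup\{n+1,n+2,\dots\}$), and the paper invokes Lemma~3.14 of Aschenbrenner--Hillar to pass from ``not well-quasi-ordered'' to a strictly ascending chain, whereas you make that step self-contained via the direct degree-two coefficient comparison; both arguments are sound.
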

 \begin{proof} Using \cite[Lemma3.14]{AH}, it suffices to show that $|_{S_\infty}$ is {\it not} well-quasi-ordering. Consider the monomials 
  \begin{align}\label{eqn12} 
 \{a_nb_n\}_{n\geq 3}, &\,\,\,\text{where $a_n:= x_{-2}\wedge x_{2}\wedge x_3\wedge \dots \wedge x_n$}\notag\\
  &\text{and  $b_n:= x_{-(n-1)}\wedge x_{-(n-2)}\wedge \dots \wedge x_{-1}\wedge x_1\wedge x_{n+1}$}.
  \end{align} 
  We claim that $a_nb_n\not{|}_{S_\infty} a_mb_m$ for $n\neq m\geq 3$:
  
  If not, let $\sigma\in S_\infty$ be such that $\sigma (a_n) \cdot\sigma (b_n)= a_mb_m$ (observe that they both are degree $2$ monomials), i.e., 
  \begin{align*} \label{eqn13} &\left(x_{\sigma(-2)}\wedge x_{\sigma (2)}\wedge x_{\sigma (3)}\wedge \dots \wedge x_{\sigma (n)}\right)\notag\\
  &\cdot  \left(
  x_{\sigma(-(n-1))}\wedge x_{\sigma(-(n-2))}\wedge \dots \wedge x_{\sigma (-1)}\wedge x_{\sigma (1)}\wedge x_{\sigma (n+1)} \right)\notag\\
  &=\left(x_{-2}\wedge x_{2}\wedge x_3\wedge \dots \wedge x_m\right) \cdot   \left(x_{-(m-1)}\wedge x_{-(m-2)}\wedge \dots \wedge x_{-1}\wedge x_1\wedge x_{m+1}\right). 
  \end{align*}
  This gives 
  \begin{align*}  \sigma \left(\{-2, 2, 3, \dots, n, n+1,  \dots \}\right) \setminus  \sigma\left(\{-(n-1), -(n-2), \dots, -1, 1, n+1, n+2, \dots  \}
  \right)\\=\begin{cases}   
  \{-2, 2, 3, \dots, m, m+1, \dots \} \setminus  \{-(m-1), -(m-2), \dots, -1, 1, m+1, m+2, \dots  \} \,\,\text{or}\\
    \{-(m-1), -(m-2), \dots, -1, 1, m+1, m+2, \dots  \}  \setminus   \{-2, 2, 3, \dots, m, m+1, \dots \}.
  \end{cases}
  \end{align*}  
  The above equation is equivalent to the following:
   \begin{align*} \sigma \left(\{2, 3, \dots, n \}\right) =\begin{cases}
 \{2, 3, \dots, m \}\,\,\text{or}\\
    \{-(m-1), -(m-2), \dots, -3, -1, 1 \}.
   \end{cases}
  \end{align*}   
  The left side of the above equation has cardinality $n-1$, whereas the ride side has cardinality $m-1$. This is a contradiction since $n\neq m$ by assumption. Thus, the infinite set $A=  \{a_nb_n\}_{n\geq 3}$ is an anti-chain under  $|_{S_\infty}$ (cf. \cite[Page 5173]{AH}). This proves the proposition.
    \end{proof}  
    
    Consider the ordered basis 
    $$ x_{-1}, x_{-2}, \dots, x_{-(m-1)}, x_1, x_2, \dots, x_{m+1}\,\,\text{of $ V_{m-1, m+1}$}.$$
    We abbreviate $m+1$ by $p$. Consider the standard maximal parabolic  subgroup $P(m-1)$ of $\SL(2m)$ (with respect to the above ordered basis) obtained by deleting the $(m-1)$-th node from the Dynkin diagram of $\SL(2m)$. Thus, 
    $P= P(m-1)$ is the stabilizer of the line $[x_1^*\wedge   x_2^*\wedge \dots \wedge x_p^*] \in \mathbb{P}( V_{m-1, p}^*)$. Consider the opposite unipotent radical $U^-= U^-_{m-1}$ of $P$. Thus, for any $g\in U^-$,
    \begin{align*} &g^{-1}(x_{-j}) = x_{-j}+ \sum_{i=1}^p \alpha_i^j(g) x_i, \,\,\,\text{for $1\leq j \leq m-1$, and}\\
    &g^{-1}(x_{i}) = x_{i}, \,\,\,\text{for $1\leq i \leq p$,}
     \end{align*}  
    for some $\alpha_i^j(g)\in k$. In fact,  $U^-$ is characterized by the above, where we allow  $\alpha_i^j(g)$ to vary  over $k$.
  The action of $U^-$ on the dual basis is given as follows (for $g\in U^-$):
    \begin{align*} &g(x_{-j}^*) = x_{-j}^*, \,\,\,\text{for $1\leq j \leq m-1$, and}\\
    &g(x_{i}^*) = x_{i}^* + \sum_{j=1}^{m-1} \alpha_i^j(g) x_{-j}^*, \,\,\,\text{for $1\leq i \leq p$,}
     \end{align*}   
     Thus, for $g\in U^-$,
     \begin{equation} \label{eqn14}
     g\left(x_1^*\wedge   x_2^*\wedge \dots \wedge x_p^*\right)= \left(x_1^*+\sum_{j_1=1}^{m-1}\,\alpha_1^{j_1}(g)x_{-j_1}^*\right)\wedge \dots \wedge    \left(x_p^*+\sum_{j_p=1}^{m-1}\,\alpha_p^{j_p}(g)x_{-j_p}^*\right).
     \end{equation} 
     With the notation as above, we have the following lemma.  
     \begin{lemma} \label{lem5} For $g\in U^-$ and ${\bf x}= x_{-n_1}\wedge \dots \wedge x_{-n_q}\wedge x_{d_1} \wedge \dots \wedge x_{d_{p-q}}$, where $ 0\leq q\leq m-1$, $0< n_1 < \dots < n_q \leq m-1$ and $0< d_1 < \dots < d_{p-q}\leq p$, we have the following identity:
     \begin{align*}  &g\left(x_1^*\wedge   x_2^*\wedge \dots \wedge x_p^*\right) \left(x_{-n_1}\wedge \dots \wedge x_{-n_q}\wedge x_{d_1} \wedge \dots \wedge x_{d_{p-q}} \right)\\
     &= \pm \det \left(\alpha^{n_i}_{m_j}\right)_{1\leq i\leq q; \,\,m_j\in \{1, 2, \dots, \hat{d_1}, \dots, \hat{d_{p-q}}, \dots, p\}}.
     \end{align*} 
  \end{lemma}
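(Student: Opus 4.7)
The plan is to evaluate the pairing $\langle g(x_1^*\wedge\cdots\wedge x_p^*),\,{\bf x}\rangle$ directly as the determinant of a $p\times p$ matrix, and then exploit the block structure forced by the definition of $U^-$ to reduce to a $q\times q$ minor whose entries are precisely the $\alpha_{m_j}^{n_i}$.

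First, I would use the standard fact that the canonical pairing $\wedge^p(V_{m-1,p}^*)\otimes \wedge^p(V_{m-1,p})\to k$ sends $(y_1^*\wedge\cdots\wedge y_p^*,\,v_1\wedge\cdots\wedge v_p)$ to $\det\bigl(y_i^*(v_j)\bigr)_{1\le i,j\le p}$. Applying this to $g(x_1^*\wedge\cdots\wedge x_p^*)=(g\,x_1^*)\wedge\cdots\wedge(g\,x_p^*)$ together with ${\bf x}$, the pairing becomes $\det M$, where $M$ is the $p\times p$ matrix whose $i$-th row records the values of $g(x_i^*)=x_i^*+\sum_{j=1}^{m-1}\alpha_i^{j}(g)\,x_{-j}^*$ on the factors of ${\bf x}$.

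Next I would compute the entries explicitly. For a column indexed by $x_{-n_k}$ one has $g(x_i^*)(x_{-n_k})=\alpha_i^{n_k}(g)$, because $x_i^*(x_{-n_k})=0$ and $x_{-j}^*(x_{-n_k})=\delta_{j,n_k}$. For a column indexed by $x_{d_\ell}$ one instead has $g(x_i^*)(x_{d_\ell})=\delta_{i,d_\ell}$, since $x_{-j}^*(x_{d_\ell})=0$ for $1\le j\le m-1$. Thus $M$ splits (after an appropriate reordering of columns according to the two blocks in ${\bf x}$) into a block whose $p-q$ columns are standard basis vectors $e_{d_1},\ldots,e_{d_{p-q}}$ of $k^p$, and a block whose $q$ columns consist of the entries $\alpha_i^{n_k}(g)$.

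Finally I would expand $\det M$ by Laplace expansion along the $p-q$ standard-basis columns: each such column forces the selection of the row $i=d_\ell$, contributing a factor of $1$ and killing that row and column. What remains is the $q\times q$ submatrix with rows indexed by the complement $\{1,2,\dots,\widehat{d_1},\dots,\widehat{d_{p-q}},\dots,p\}=:\{m_1<\cdots<m_q\}$ and columns indexed by $n_1,\ldots,n_q$, whose determinant is $\det\!\bigl(\alpha^{n_i}_{m_j}\bigr)$. The only nuisance is bookkeeping the overall sign $\pm$ produced by (i) the reordering of the factors of ${\bf x}$ into the two blocks and (ii) the Laplace expansion, but since the statement only asserts equality up to $\pm$, this is routine. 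The main (and very mild) obstacle is keeping the indexing conventions straight between rows of $M$, the basis $\{x_i\}$, and the surviving index set $\{m_j\}$; there is no substantive difficulty.
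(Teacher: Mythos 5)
Your argument is correct and is essentially the paper's proof: both start from the determinantal formula for the pairing of decomposable $p$-vectors, compute the entries $g(x_i^*)(x_{-n_k})=\alpha_i^{n_k}(g)$ and $g(x_i^*)(x_{d_\ell})=\delta_{i,d_\ell}$, and reduce the $p\times p$ determinant to the $q\times q$ minor. The paper accomplishes the final reduction by permuting the rows $d_1,\dots,d_{p-q}$ to the bottom to obtain a block-triangular matrix, whereas you perform a Laplace expansion along the unit columns; these are equivalent maneuvers and both give the claimed determinant up to sign.
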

  \begin{proof} 
   
  \begin{equation*}g\left(x_1^*\wedge   x_2^*\wedge \dots \wedge x_p^*\right)   {\bf x}  =\det 
\begin{pmatrix}
\alpha_1^{n_1}(g), &\ldots, &\alpha_1^{n_q}(g), & 0, & 0, &\ldots, &0 \\
\alpha_2^{n_1}(g), &\ldots, &\alpha_2^{n_q}(g), & 0, & 0, &\ldots, &0 \\
\vdots& &\vdots &\vdots & \vdots &&\vdots  \\
\alpha_{d_1}^{n_1}(g), &\ldots, &\alpha_{d_1}^{n_q}(g), & 1, & 0, &\ldots, &0 \\
\vdots& &\vdots &\vdots & \vdots &&\vdots  \\
\alpha_{d_2}^{n_1}(g), &\ldots, &\alpha_{d_2}^{n_q}(g), & 0, & 1, &\ldots, &0 \\
\vdots& &\vdots &\vdots & \vdots &&\vdots  \\
\alpha_{d_{p-q}}^{n_1}(g), &\ldots, &\alpha_{d_{p-q}}^{n_q}(g), & 0, & 0, &\ldots, &1 \\
\vdots& &\vdots &\vdots & \vdots &&\vdots  \\
\alpha_{{p}}^{n_1}(g), &\ldots, &\alpha_{{p}}^{n_q}(g), & 0, & 0, &\ldots, &0 
\end{pmatrix}.
    \end{equation*}
    By moving $d_1, d_2, \dots, d_{p-q}$-th rows of the above matrix to the end of the matrix, we get (from the above equation), 
     \begin{equation*}   
  g\left(x_1^*\wedge   x_2^*\wedge \dots \wedge x_p^*\right)   {\bf x}  =\pm \det \left(\alpha^{n_i}_{m_j}\right)_{1\leq i\leq q; \,\,m_j\in \{1, 2, \dots, \hat{d_1}, \dots, \hat{d_{p-q}}, \dots, p\}}.
  \end{equation*} 
    This proves the lemma.
     \end{proof}
  Let $\tilde{\Gr}(\infty/2, V^*_\infty) \hookrightarrow \wedge^{\infty/2}(V_\infty^*)$ be the Pl\"ucker embedding  induced from the  Pl\"ucker embeddings   $\tilde{\Gr}(p, V^*_{n,p})  \hookrightarrow \wedge^p(V^*_{n, p})$ 
  and let $\I \subset k[\wedge^{\infty/2}(V_\infty^*)] $ be the ideal generated by the Pl\"ucker relations, i.e., the ideal of the subscheme $\tilde{\Gr}(\infty/2, V^*_\infty)$ embedded in  $\wedge^{\infty/2}(V_\infty^*)$  via the Pl\"ucker embedding (cf. \cite[$\S$III.2.7]{EH}). Consider the sequence of ideals
  in $k[\wedge^{\infty/2}(V_\infty^*)] $:
  \begin{equation} \label{eqn15} 
  \I_n:= \left\langle S_\infty(a_3\cdot b_3), \dots, S_\infty(a_n\cdot b_n) \right\rangle +\I,
  \end{equation} 
  where $a_i, b_i$ are defined by the equation \eqref{eqn12} and $S_\infty (a_i\cdot b_i)$ denotes the collection $\{\sigma(a_i)\cdot \sigma(b_i)\}_{\sigma \in S_\infty}$. 
  
  We have the following main theorem of this note.
  \begin{theorem} \label{thm6}The above ideals satisfy:
  $$\I_3 \subsetneq \I_4 \subsetneq  \I_5 \subsetneq \dots .$$
  In particular, the ring $\mathcal{R}:=k[\tilde{\Gr}(\infty/2, V^*_\infty)]$ is not noetherian with respect to the $S_\infty$-stable ideals. 
  Thus, the stronger form of the Matroid Minor Conjecture (cf. Conjecture \ref{conj2.8}) is false.
  \end{theorem}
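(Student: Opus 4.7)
The plan is to introduce a $\mathbb{Z}_{\geq 0}$-valued grading on the degree-$2$ part of $R := k[\wedge^{\infty/2}(V^*_\infty)]$ which is preserved by both the Pl\"ucker relations and the $S_\infty$-action, and then to show $a_{n+1}b_{n+1}$ and every degree-$2$ generator of $\I_n$ sit in distinct graded components, so that $a_{n+1}b_{n+1}\notin \I_n$. For basis elements $\mathbf{x},\mathbf{y}$ of $\varinjlim_{(n,p)}\wedge^p(V_{n,p})$ with supports $T(\mathbf{x}), T(\mathbf{y})$, I define the \emph{singleton count} of the degree-$2$ monomial $\mathbf{x}\cdot\mathbf{y}$ by
\[s(\mathbf{x}\cdot\mathbf{y}) := |T(\mathbf{x})\,\triangle\,T(\mathbf{y})|.\]
Since the transition $\beta^{n,p+1}$ appends the same index $p+1$ to both supports, the symmetric difference is unchanged, so $s$ is well-defined on $R_2 := S^2(\varinjlim_{(n,p)}\wedge^p(V_{n,p}))$ and gives a direct-sum decomposition $R_2 = \bigoplus_{c\geq 0} R_2^{(c)}$.

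The principal step will be to verify that $\I\cap R_2$ is a graded subspace. A classical degree-$2$ Pl\"ucker relation reads
\[\sum_{j\in J\setminus I}\pm \Delta_{I\cup\{j\}}\Delta_{J\setminus\{j\}} = 0\]
with $|I|=p-1,\ |J|=p+1$; for every $j\in J\setminus I$ a direct check gives $(I\cup\{j\})\,\triangle\,(J\setminus\{j\}) = I\,\triangle\,J$, so every summand lies in the single graded piece $R_2^{(|I\triangle J|)}$. Hence $\I\cap R_2$ is homogeneous and the grading descends to $\calr_2 := R_2/(\I\cap R_2) = \bigoplus_c\calr_2^{(c)}$. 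Since any $\sigma\in S_\infty$ is a bijection of the index set, $s$ is $S_\infty$-invariant, so the $S_\infty$-action on $\calr_2$ respects the grading.

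Next, I would compute the singleton counts using the limiting supports
\[S_a = \{-2\}\cup\{2,3,4,\ldots\},\qquad S_b^j = \{-(j-1),\ldots,-1\}\cup\{1\}\cup\{j+1,j+2,\ldots\},\]
obtaining $S_a\,\triangle\,S_b^j = \{2,3,\ldots,j\}\cup\{-(j-1),-(j-2),\ldots,-3,-1\}\cup\{1\}$, so $s(a_jb_j) = 2(j-1)$ and, by the $S_\infty$-invariance above, $s(\sigma(a_jb_j)) = 2(j-1)$ for every $\sigma\in S_\infty$. In particular $s(a_{n+1}b_{n+1}) = 2n$, whereas every degree-$2$ generator of $\I_n$ has singleton count in $\{4,6,\ldots,2n-2\}$. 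Since every generator of $\I_n$ has degree $2$, the degree-$2$ component of $\I_n$ in $\calr$ equals the $k$-linear span of $\{\sigma(a_ib_i) : 3\leq i\leq n,\ \sigma\in S_\infty\}$, contained in $\bigoplus_{c\in\{4,6,\ldots,2n-2\}}\calr_2^{(c)}$. As $a_{n+1}b_{n+1}\in \calr_2^{(2n)}$ lies outside this direct sum, one has $a_{n+1}b_{n+1}\notin \I_n$; combined with $a_{n+1}b_{n+1}\in \I_{n+1}$, this gives $\I_n\subsetneq \I_{n+1}$ for every $n\geq 3$. The main technical point is the Pl\"ucker homogeneity above --- a short combinatorial check --- while the rest is elementary set-theoretic bookkeeping.
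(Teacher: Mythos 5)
Your proposal is correct and takes a genuinely different, cleaner route from the paper's. The paper argues by contradiction: assuming $a_\ell b_\ell - \sum_i \sigma_i(a_ib_i)$ vanishes on the Grassmannian cone, it evaluates this putative relation at the orbit of the base point under the opposite unipotent radical $U^-_m$, reduces each Pl\"ucker coordinate to an explicit minor in the coordinates $\alpha^i_j$ on $U^-_m$ (Lemma~\ref{lem5}), and then uses a $k^*$-weight ($k^*$-action on the $\alpha^i_j$) argument to force a combinatorial identity of the form $\sigma(\{2,\dots,i\}) = \{2,\dots,\ell\}$ or its complementary variant, which fails on cardinality grounds. Your proof replaces the evaluation-plus-torus-weight machinery by a purely combinatorial ``singleton-count'' grading on $R_2$, invariant under the transition maps, under $S_\infty$, and (the key check) under the degree-$2$ Pl\"ucker relations, so that the grading descends to $\calr_2$. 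The invariant you extract ($s(\sigma(a_ib_i)) = 2(i-1)$) is, in effect, the same combinatorial datum that the paper detects via the $k^*$-weights of the minors in Lemma~\ref{lem5}, but you reach it directly from the Pl\"ucker combinatorics without ever parametrizing points of the Grassmannian. This yields a structurally cleaner proof and isolates the genuinely new ingredient needed beyond Proposition~\ref{thm 4} (namely, homogeneity of the quadratic Pl\"ucker relations for the symmetric-difference statistic).

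One small gap you should close: to conclude $a_{n+1}b_{n+1} \notin \I_n$ from the grading argument, you need the image of $a_{n+1}b_{n+1}$ in $\calr_2^{(2n)}$ to be nonzero. This is true but deserves a sentence: $\calr = \varinjlim k[\tilde{\Gr}(p, V^*_{n,p})]$ is a direct limit of integral domains along injective maps, hence a domain, and the Pl\"ucker coordinates $a_{n+1}, b_{n+1}$ are nonzero elements of $\calr$ (each evaluates to $1$ at an appropriate decomposable point), so their product is nonzero. Also, a minor wording issue: your sentence ``every degree-$2$ generator of $\I_n$ has singleton count in $\{4,6,\ldots,2n-2\}$'' is false as stated (the Pl\"ucker quadrics in $\I \subset \I_n$ have arbitrary singleton counts), though your very next sentence makes clear you mean the generators modulo $\I$, which is what the argument actually uses.
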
 \label{thm3.1}
  \begin{proof} Fix $\ell >3$. It suffices to show that $a_\ell\cdot b_\ell \notin \I_n$ for any $n< \ell$. This is equivalent to proving that for any $\sigma_i\in  k[S_\infty]$, where  $ k[S_\infty]$  is the group algebra of $S_\infty$, 
  $$
  a_\ell\cdot b_\ell - \sum_{i=3}^{\ell-1}\, \sigma_i (a_i\cdot b_i)\,\text{does not vanish identically on $\tilde{\Gr}(\infty/2, V^*_\infty)$}.$$ 
  Write 
  $$\sigma_i = \sum_k\, z^k_i \sigma^k_i\,\,\text{(a finite sum) for some $z^k_i\in k$ and $\sigma^k_i\in S_\infty$}.$$
  Take $m\geq \ell$ large enough so that each $\sigma_i^k$ (with nonzero $z^k_i$) is a permutation of the basis of $V_{m-1, p:=m+1}$. We want to show that 
   \begin{align} \label{eqn16}   
   a_\ell\cdot b_\ell - &\sum_{i=3}^{\ell-1}\, \sum_k \, z_i^k\sigma_i^k (a_i\cdot b_i)\,\text{thought of as a function on}\notag\\
   &     \wedge^p(V_{m-1, p}^*) \,\text{does not vanish identically on $U_m^-$}. 
    \end{align}   
  Now,
   \begin{equation*}
\label{eqn5}
\xymatrix{
S^\bullet \left(\wedge^p(V_{m-1, p})\right) \hookrightarrow \ar[d] & k\left[\wedge^{\infty/2}(V_\infty^*)\right]
 \ar[d] \\
 k\left[\tilde{\Gr}(p, V^*_{m-1,p})\right]  \hookrightarrow &  k\left[\tilde{\Gr}(\infty/2, V^*_\infty)\right], 
}
\end{equation*}   
 where $S^\bullet$ denotes the symmetric algebra and both the vertical maps are surjective.   
   Following the notation in Lemma \ref{lem5}, we rewrite 
 $$   \det \left(\alpha^{n_i}_{m_j}\right)_{1\leq i\leq q; \,\,m_j\in \{1, 2, \dots, \hat{d_1}, \dots, \hat{d_{p-q}}, \dots, p\}}= \det\left(\alpha^{n_1, \dots, n_q}_{m_1, \dots, m_q}\right).$$
 Assume, if possible, that   
   \begin{equation} \label{eqn17}
     \left(g\left(x_1^*\wedge   x_2^*\wedge \dots \wedge x_p^*\right)\right)  \left(a_\ell\cdot b_\ell - \sum_{i=3}^{\ell-1}\, \sum_k\, z^k_i\sigma^k_i (a_i\cdot b_i)\right)=0,\,\,\text{for all $g\in U^-_m$}.
     \end{equation} 
     By Lemma \ref{lem5}, we get 
     $$  g\left(x_1^*\wedge   x_2^*\wedge \dots \wedge x_p^*\right)  \left(a_\ell\cdot b_\ell\right)= \alpha_1^2 \det \left(\alpha^{1, \dots, \ell-1}_{2, 3, \dots, \ell}\right).$$  
  Now, if $  g\left(x_1^*\wedge   x_2^*\wedge \dots \wedge x_p^*\right)  \left(\sigma^k_i(a_i\cdot b_i)\right)$ has any nonzero contribution to the above term,  considering the action of $k^*$ on each of the variables $\alpha^i_j$ for a fixed $i$ (and any $j$) and similarly for $k^*$-action on $\alpha^i_j$ for a fixed $j$ (and any $i$) by the same character, we should have (by Lemma \ref{lem5}):
  $$ \sigma^k_i(A) = \{-C, -2, \ell+1, \dots, p, C'\},$$
  where $A:=\{-2, 2, 3, \dots, p\} $
  and 
  $$ \sigma^k_i(B) = \{-D, -2, \ell+1, \dots, p, D'\},$$
  for some $C, C', D, D'$ satisfying the following: 
  $$ \{1, \hat{2}, 3, \dots, \ell-1\}= C\sqcup D \,\,\text{and}\,\,  \{1, 2, 3, \dots, \ell\}= C'\sqcup D',$$
  where $ B:= \{-(i-1), -(i-2), \dots, -1, 1, i+1, i+2, \dots, p\}$.

  Setting $c=|C|$, we get 
  $$|D|= \ell-2-c, \,|C'|= -(p-\ell+c+1)+p= \ell-c-1,\,\text{and}\,\,   |D'|= -(p-\ell+1+\ell-2-c)+p= c+1.$$ 
  Then,
   \begin{equation} \label{eqn18}  \sigma^k_i(A\setminus B) = \sigma^k_i(\{2, 3, \dots, i\})
   \end{equation}
   and 
    \begin{equation} \label{eqn19}  \sigma^k_i(B\setminus A) = \sigma^k_i(\{-(i-1), -(i-2), \dots, -3, -1, 1\}).
    \end{equation}
    But, 
    \begin{equation} \label{eqn20}  \sigma^k_i(A)\setminus \sigma^k_i(B) = \{-C, C'\}
   \end{equation}
   and 
    \begin{equation} \label{eqn21}  \sigma^k_i(B)\setminus \sigma^k_i(A) =  \{-D, D'\}.
    \end{equation}    
    This leads to a contradiction for any $i< \ell$, since (by the equations \eqref{eqn18} and \eqref{eqn20}),
    $$|\sigma^k_i(A\setminus B)|=i-1=\ell-1.$$
    Also, by the equations     \eqref{eqn19} and \eqref{eqn21} ,
$$|\sigma^k_i(B\setminus A)|=i-1=\ell-1.$$
    This contradiction proves that the equation \eqref{eqn17} cannot be true. This proves the theorem.
     \end{proof}
  \vskip6ex


\begin{thebibliography}{9}  
  \bibitem[AH]{AH} M. \, Aschenbrenner and C. \, Hillar, {\it Finite generation of symmetric ideals}. {\it Transactions of A.M.S.} {\bf   359} (2007), 5171--5192.
  \bibitem[Di]{Di} R. Diestel, {\it Graph theory}, Springer-Verlag, New York,
1997.  

\bibitem[EH]{EH} D. Eisenbud and J. Harris, {\it The Geometry of Schemes}, GTM volume 197, Springer (2000).

\bibitem[GGW]{GGW} J. Geelen, B. Gerards and G. Whittle, Solving Rota's conjecture, {\it Notices of the AMS}  {\bf 61}, Number 7, (August 2014), 736--743.

\bibitem[Kur]{Kur} K. Kuratowski, Sur le probl\'eme des courbes gauches
en topologie, {\it Fund. Math.} {\bf  15} (1930), 271--283.
  
  
\bibitem[RS]{RS} N. Robertson and P. D. Seymour, Graph Minors. XX. Wagner’s Conjecture, {\it J.
Combin. Theory Ser. B} {\bf  92} (2004), 325--357.

\bibitem[Ro]{Ro} G.-C. Rota, Combinatorial theory, old and new, in:
{\it Proc. Internat. Cong. Math. (Nice, 1970)}, pp. 229--233.
Gauthier-Villars, Paris.  
  
  
  
  
  \end{thebibliography}
       \end{document}